\newcommand{\rnc}{\renewcommand}
\newcommand{\nc}{\newcommand}
\newcommand{\mrm}{\mathrm}
\renewcommand{\b}{\textbf}
\newcommand{\bs}{\boldsymbol}
\nc{\mb}{\mathbb}
\nc{\mac}{\mathcal}
\nc{\E}{\mb{E}}
\nc{\N}{\mb{N}}
\nc{\R}{\mb{R}}
\nc{\Q}{\mb{Q}}
\rnc{\P}{\mrm P}
\rnc{\d}{\mrm d}
\nc{\C}{\mac{C}}
\nc{\D}{\mac{D}}
\nc{\B}{\mac{B}}
\nc{\wh}{\widehat}
\nc{\oPo}{\stackrel{p}{\longrightarrow}}
\nc{\oWo}{\stackrel{w}{\longrightarrow}}
\nc{\oDo}{\stackrel{d}{\longrightarrow}}
\nc{\tn}{\textnormal}
\rnc{\mc}{\mathcal}
\numberwithin{equation}{section}
\newtheorem{thm}{Theorem}
\newtheorem{rem}{Remark}
\begin{document}

%\title{\Large \bf Unifying Wild and Weird Bootstrap with Data-Dependent Multipliers 
%    and a Comparison to other Approximation Techniques
%    applied to the Aalen-Johansen Estimator.}
    
\title{\Large Bootstrapping the Kaplan-Meier Estimator on the Whole Line
}
    
\author{Dennis Dobler$^{*}$ %\\[1ex] 
%{\small University of Goettingen, Germany}
}
\maketitle

\begin{abstract}

This article is concerned with proving the consistency of Efron's (1981) bootstrap 
for the Kaplan-Meier estimator on the whole support of a survival function.
While other works address the asymptotic Gaussianity of the estimator itself without restricting time
(e.g. \citealp{gill83}, and \citealp{ying89}),
we enable the construction of bootstrap-based time-simultaneous confidence bands for the whole survival function.
Other practical applications include bootstrap-based confidence bands for the mean residual life-time function or the Lorenz curve as well as confidence intervals for the Gini index.

\end{abstract}

\noindent{\bf Keywords:} Counting process; Right-censoring; Resampling; Efron's bootstrap; Mean-residual lifetime; Lorenz curve; Gini index.

\vfill
\vfill

\noindent${}^{*}$ {University of Ulm, Institute of Statistics, Germany}\\
%\noindent${}^{1}$ {Corresponding author's email: dennis.dobler@uni-ulm.de}

\newpage

\section{Introduction}

This article reconsiders Efron's~(1981) \nocite{efron81} bootstrap of Kaplan-Meier estimators.
It is well-known that drawing with replacement directly from the original observations consisting of (event time, censoring indicator) reproduces the correct covariance structure;
see e.g. \cite{akritas86}, \cite{lo86}, \cite{horvath87} or \cite{vaart96} for an application in empirical processes.
Let $T: \Omega \rightarrow (0,\tau)$ be a continuously distributed random survival time with survival function given by $S(t)  = 1 - F(t) = P(T > t)$.
For conceptual convenience we mainly refer to $T$ as a random \emph{survival time},
although other interpretations are also reasonable; see the examples below.
In the previously mentioned articles the typical assumption $S(\tau) > 0$ is met for mathematical convenience in proving weak convergence of estimators for $S$ on the Skorohod space $D[0,\tau]$
and because most studies involve a rather strict censoring mechanism:
after a pre-specified end of study time each individual without an observed event is considered as right-censored.
Thus, it is often not possible to draw inference on functionals of the whole survival function.

Some functionals, however, indeed require the possibility to observe arbitrarily large survival times.
For instance, consider the \emph{mean residual life-time} function 
\begin{align}
 \label{eq:MRLT}
 t \longmapsto g(t) = \E[T - t  \ | \ T > t] = \frac{1}{S(t)} \int_t^\tau S(u) \d u;
\end{align}
see e.g. \cite{meilijson72}, \cite{gill83}, Remark~3.3, and \cite{stute93}.
This function describes the expected remaining life-time given the survival until a point of time $t > 0$.
%See \cite{gill83} for related weak convergences of the related estimated 
Another, econometric example of a functional of whole survival curves is the \emph{Lorenz curve} 
\begin{align}
\label{eq:Lorenz}
 p \longmapsto L(p) = \mu^{-1} \int_0^p F^{-1}(t) \d t = \mu^{-1} \int_0^{F^{-1}(p)} s \d F(s),
\end{align}
where $F^{-1}(t) = \inf \{ u \geq 0: F(u) \geq t \}$ is the left-continuous generalized inverse of $F$ and $\mu = \int_0^\infty t \d F(t)$ is its mean.
With the interpretation of $T$ being the income of a random individual in a population,
this function $L$ obviously represents the total income of the lowest $p$th fraction of all incomes.
A closely related quantity is the \emph{Gini index} 
\begin{align}
\label{eq:Gini}
 G = \frac{\int_0^1 (u - L(u)) \d u}{\int_0^1 u \d u} \in [0,1]
\end{align}
as a measure of uniformity of all incomes within a population; see e.g. \cite{tse06}.
The value $G = 0$ represents perfect equality of all incomes,
whereas $G=1$ describes the other extreme: 
only one persons gains everything and the rest nothing.

All quantities \eqref{eq:MRLT}, \eqref{eq:Lorenz} and \eqref{eq:Gini} 
are statistical functionals of the \emph{whole} survival function $S$.
First analyzing $S$ only on a subset of its support results inevitably in an alternation of the above functionals in a second step.
And this affects the interpretation of such quantities.
In order to circumvent such problems,
estimating the whole survival function is the obvious solution:
Henceforth, denote by $\tau = \inf\{t \geq 0 : S(t) = 0 \} \in (0,\infty]$ the support's right end point.
\cite{wang87} and \cite{stute93} showed the uniform consistency of the \emph{Kaplan-Meier} or \emph{product-limit estimator} 
$(\wh S(t))_{t \in [0,\tau]}$ for $(S(t))_{t \in [0,\tau]}$ 
and \cite{gill83} and \cite{ying89} proved its weak convergence on the Skorohod space $D[0,\tau]$.
For robust statistical inference procedures concerning the above functionals of $S$
it is thus necessary to extend well-known bootstrap results for the Kaplan-Meier estimator to the whole Skorohod space $D[0,\tau]$.
After presenting this primary result
we deduce inference procedures for the quantities \eqref{eq:MRLT} to \eqref{eq:Gini}.

This article is organized as follows.
Section~\ref{sec:EBS_KME_PRE} introduces all required estimators, recapitulates previous weak convergence results on $D[0,t]$, $t < \tau$, and provides handy results for checking all main assumptions.
The main theorems on weak convergence of the bootstrap Kaplan-Meier estimator are presented in Section~\ref{sec:EBS_KME_WL_MAIN}, 
including a consistency theorem for a bootstrap variance function estimator.
Section~\ref{sec:EBS_KME_APP} deduces inference procedures for \eqref{eq:MRLT} to \eqref{eq:Gini}
and the final Section~\ref{sec:EBS_KME_DISC} gives a discussion on future research possibilities.
All proofs are given in the Appendix.
Most of this article's results originate from the University of Ulm PhD thesis of Dennis Dobler; 
cf. Chapter~6 and~7 of \cite{dobler16}.

\section{Preliminary Results}
\label{sec:EBS_KME_PRE}

Let $T_1, \dots, T_n: \Omega \rightarrow (0,\infty), n \in \N,$ be independent survival times 
with continuous survival functions
$S(t) = 1 - F(t) = P(T_1 > t)$ and cumulative hazard function $A(t) = \int_0^t \alpha(u) \d u =  -\int_0^t (\d S) / S_- =  -\log S(t)$.
Independent thereof, let $C_1, \dots, C_n: \Omega \rightarrow (0,\infty)$ be i.i.d. (censoring) random variables with (possibly discontinuous) survival function
$G(t) = P(C_1 > t)$ such that the observable data consist of all $1 \leq i \leq n$ pairs $(X_i, \delta_i) := (T_i \wedge C_i, \b 1\{X_i = T_i\})$.
Here, $\b 1 \{ \cdot \}$ is the indicator function.
Thus, the survival function of $X_1$ is $ H = S \cdot G$.
The Kaplan-Meier estimator is defined by $\wh S_n(t) = \prod_{i: X_{i:n} \leq t} (1 - \frac{\delta_{[i:n]}}{n-i+1})$,
where $(X_{1:n},\dots, X_{n:n})$ is the order statistic of $(X_1, \dots, X_n)$
and $(\delta_{[1:n]}, \dots, \delta_{[n:n]})$ are their concomitant censoring indicators.
Throughout, we assume that
\begin{align}
\label{cond:weak_cens}
 - \int_0^\tau \frac{\d S}{G_-} < \infty
\end{align}
which restricts the magnitude of censoring to a reasonable level.
For instance \cite{gill83}, \cite{ying89} and \cite{akritas97} require this condition for an analysis of the large sample properties of Kaplan-Meier estimators on the whole support $[0,\tau]$.
Thereof, \cite{gill83} requires Condition~\eqref{cond:weak_cens} for a vanishing upper bound in Lenglart's inequality.
Obviously, the above condition implies that $[0,\tau]$ is contained in the support of $G$;
see also \cite{allignol14} for a similar condition in a non-Markov illness-death model, reduced to a competing risks problem.

Denote by $\widehat T_n := \max_{i \leq n}  X_i$ the largest observed event or censoring time
and let, for a function $t \mapsto f(t)$, the notation ${f}^{\widehat T_n}$ be its stopped version,
i.e., $f^{\widehat T_n}(t) = f(t \wedge \widehat T_n)$.
The monotone function
$
 t \mapsto \sigma^2(t) =  \int_0^t (\d A) / H_-
$
is the asymptotic variance function of the related Nelson-Aalen estimator for $A$
and reappears in the asymptotic covariance function of $\wh S_n$.
Throughout, all convergences (in distribution, probability, or almost surely) are understood to hold as $n \rightarrow \infty$ 
and convergence in distribution and in probability 
are denoted by $\stackrel{d}{\rightarrow}$ and $\stackrel{p}{\rightarrow}$, respectively.
The present theory relies on the following weak convergence results for the Kaplan-Meier process $\widehat S_n$ of $S$.
\begin{lemma}
\label{lem:KME_WL}
Let $B$ denote a Brownian motion on $[0,\tau]$ and suppose \eqref{cond:weak_cens} holds. \\
 \tn{(a)} \tn{Theorem 1.2(i) of \citep{gill83}:}
 On $D[0,\tau]$ we have
  $\sqrt{n} (\widehat S_n - S)^{\widehat T_n} \oDo W:= S \cdot  (B \circ \sigma^2),$ \\
 \tn{(b)} \tn{Part of Theorem 2 in \citep{ying89}:}
 On $D[0,\tau]$ we have
  $\sqrt{n} (\widehat S_n - S) \oDo W = S \cdot (B \circ \sigma^2).$
\end{lemma}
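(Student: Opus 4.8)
Since both parts restate known results (\citet{gill83} and \citet{ying89}), the plan is to recall the structure of the argument rather than reproduce all details. I would work in the counting-process framework: set $N_n(t)=\sum_{i=1}^n\b 1\{X_i\le t,\,\delta_i=1\}$ and $Y_n(t)=\sum_{i=1}^n\b 1\{X_i\ge t\}$, so that the Nelson--Aalen estimator $\wh A_n(t)=\int_0^t\b 1\{Y_n>0\}\,\d N_n/Y_n$ satisfies, on $[0,\wh T_n]$, the exact identity $\wh A_n-A=\int_0^{\cdot}\b 1\{Y_n>0\}\,\d M_n/Y_n$, where $M_n=N_n-\int_0^{\cdot}Y_n\,\d A$ is a square-integrable martingale for the filtration generated by the observed data. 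First I would fix $t<\tau$ and apply Rebolledo's martingale central limit theorem on $D[0,t]$: the predictable variation $n\int_0^{\cdot}\b 1\{Y_n>0\}\,\d A/Y_n$ converges in probability to $\sigma^2$ since $Y_n/n\to H_-$ uniformly and $H_-$ stays bounded away from zero on $[0,t]$, while the jumps are $O(n^{-1/2})$, so the Lindeberg-type condition is automatic; hence $\sqrt n(\wh A_n-A)\oDo B\circ\sigma^2$ on $D[0,t]$.

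Next I would pass to the survival scale. The product-integral (Duhamel) identity gives, on $[0,\wh T_n]$, $\wh S_n(t)-S(t)=-S(t)\int_0^t\frac{\wh S_n(u-)}{S(u)}\,\d(\wh A_n-A)(u)$, so that $\sqrt n(\wh S_n-S)$ equals $-S(t)$ times a stochastic integral against $\sqrt n(\wh A_n-A)$. By the uniform consistency of $\wh S_n$ on $[0,\tau]$ (\citet{wang87}, \citet{stute93}) the integrand ratio $\wh S_n(u-)/S(u)$ tends to $1$ uniformly, and an integration by parts --- equivalently, the functional delta method applied through Hadamard differentiability of the product-integral map --- converts the weak convergence of the hazard estimator into $\sqrt n(\wh S_n-S)\oDo W=S\cdot(B\circ\sigma^2)$, at first only on every $D[0,t]$ with $t<\tau$; here one also uses the symmetry $-S\cdot(B\circ\sigma^2)\overset{d}{=}S\cdot(B\circ\sigma^2)$.

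The hard part is the passage to the full interval $[0,\tau]$. For part (a) I would prove that for every $\varepsilon>0$ there is $t_0<\tau$ making $\limsup_n\P\big(\sup_{t_0\le t\le\wh T_n}|\sqrt n(\wh S_n-S)(t)|>\varepsilon\big)$ as small as desired, so that the processes are tight at $\tau$. This is precisely where Condition~\eqref{cond:weak_cens} enters: since $S_-^2\,\d\sigma^2=-\d S/G_-$, one has $\int_0^{\tau}S_-^2\,\d\sigma^2=-\int_0^{\tau}\d S/G_-<\infty$ and $\mathrm{Var}\,W(t)=S(t)^2\sigma^2(t)\le\int_0^{t}S_-^2\,\d\sigma^2$, so Lenglart's inequality, applied to the $S$-weighted martingale coming from the representation above, bounds the tail oscillation by the variance increment $\int_{t_0}^{\tau}S_-^2\,\d\sigma^2$, which vanishes as $t_0\uparrow\tau$; since $W$ is a.s.\ continuous at $\tau$ with $W(\tau)=0$, this gives (a). For part (b) it remains to treat $t\in(\wh T_n,\tau]$, where $\wh S_n$ is frozen at $\wh S_n(\wh T_n)$: I would split $\sqrt n(\wh S_n(\wh T_n)-S(t))=\sqrt n(\wh S_n(\wh T_n)-S(\wh T_n))+\sqrt n(S(\wh T_n)-S(t))$, observe that the first term is the value at $\wh T_n$ of the already-controlled process of (a) while the second lies in $[0,\sqrt n\,S(\wh T_n)]$, and verify $\sqrt n\,S(\wh T_n)\oPo 0$ under \eqref{cond:weak_cens}; together with the smallness of $W$ near $\tau$ this upgrades (a) to (b). I expect this tail analysis near $\tau$ --- where the at-risk set $Y_n$ becomes thin and $\sigma^2$ may diverge, so that only the $S$-weighted difference $\wh S_n-S$ stays tight and Condition~\eqref{cond:weak_cens} is exactly the integrability one needs --- to be the real obstacle.
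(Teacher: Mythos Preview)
The paper does not prove Lemma~\ref{lem:KME_WL} at all: both parts are stated as direct citations of \citet{gill83} and \citet{ying89}, and no argument appears in the text or the appendix. Your sketch is a faithful summary of the original proofs in those references --- martingale CLT on compact $[0,t]$, Duhamel/product-integral transfer to the survival scale, Lenglart-based tightness near $\tau$ via \eqref{cond:weak_cens} for (a), and Ying's handling of the residual $\sqrt{n}\,S(\widehat T_n)\oPo 0$ for (b) --- so it is correct in outline, but for the purposes of this paper it is simply more than is required.
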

Denote by $\widehat A_n(t) = \sum_{i: X_{i:n} \leq t} \frac{\delta_{[i:n]}}{n-i+1}$ the Nelson-Aalen estimator for the cumulative hazard function $A(t)$ 
and by $\widehat G_n$ the Kaplan-Meier estimator for the censoring survival function $G$.
Note that $\widehat H_n = \widehat G_n \widehat S_n$ holds for the empirical survival function of $H$
since, almost surely (a.s.), no survival time equals a censoring time: $T_i \neq C_j$ a.s. for all $i,j$.
The asymptotic covariance function $\Gamma$ of $W$ in Lemma~\ref{lem:KME_WL} 
and a natural estimator $\widehat \Gamma_n$ are given by
\begin{align*}
 \Gamma(u,v) = S(u) \Big( \int_0^{u \wedge v} \frac{\d A}{H_-} \Big) S(v) \quad \text{and} \quad
 \widehat \Gamma_n(u,v)  = \widehat S_n(u) \Big( \int_0^{u \wedge v} \frac{\d \widehat A_n}{\widehat H_{n-}} \Big) \widehat S_n(v).
\end{align*}
The following lemma is helpful for an assessment of Condition~\eqref{cond:weak_cens} and for studentizations.
\begin{lemma}
\label{lem:VAR_KME_WL}
 \tn{(a)} For all $t \in [0,\tau]$ it holds that
  $$ - \int_t^\tau \frac{\d \widehat S_n}{\widehat G_{n-}} \oPo - \int_t^\tau \frac{\d S}{G_-} \leq \infty. $$ \\
 \tn{(b)} In case of \eqref{cond:weak_cens} we have
   \begin{align*}
      \sup_{(u,v) \in [0, \tau]^2} | \widehat \Gamma_n (u,v) - \Gamma(u,v) | \oPo 0.
   \end{align*}
\end{lemma}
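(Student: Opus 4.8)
The plan is to prove part~(a) first, since part~(b) will be derived from it. For part~(a) I would begin with the elementary identities
\[
  - \int_t^\tau \frac{\d\wh S_n}{\wh G_{n-}} = \frac1n \sum_{i=1}^n \frac{\delta_i\, \b 1\{X_i > t\}}{\wh G_n(X_i-)^2}, \qquad
  - \int_t^\tau \frac{\d S}{G_-} = \E\Big[ \frac{\delta_1\, \b 1\{X_1 > t\}}{G(X_1-)^2} \Big] \in [0,\infty],
\]
the first being read off the downward jumps of the product-limit estimator using $\wh H_n = \wh S_n \wh G_n$ and $\wh H_n(X_{i:n}-) = (n-i+1)/n$, the second by conditioning on $T_1$. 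First I would establish the truncated convergence: for every $s \in (t,\tau)$ one has $\inf_{[0,s]} G = G(s) > 0$ (automatic under \eqref{cond:weak_cens}), hence $\| 1/\wh G_{n-} - 1/G_- \|_{\infty,[0,s]} \to 0$ by the uniform strong consistency of $\wh G_n$, and combining this with $\| \wh S_n - S \|_{\infty,[0,\tau]} \to 0$ \citep{wang87,stute93}, an integration-by-parts estimate (exploiting that $S$ is continuous and $1/G_-$ has bounded variation on $[0,s]$) yields $- \int_t^s \d\wh S_n/\wh G_{n-} \to - \int_t^s \d S/G_-$ a.s. Since $s \mapsto - \int_t^s \d\wh S_n/\wh G_{n-}$ is nondecreasing, passing to $\liminf_n$ and then $s \uparrow \tau$ (monotone convergence on the right) gives $\liminf_n \big( - \int_t^\tau \d\wh S_n/\wh G_{n-} \big) \ge - \int_t^\tau \d S/G_-$ a.s.; this already proves part~(a) when the right-hand side equals $\infty$.

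The hard part is the matching upper bound when $- \int_t^\tau \d S/G_- < \infty$, i.e.\ showing that the tail $- \int_s^\tau \d\wh S_n/\wh G_{n-}$ becomes arbitrarily small in probability, uniformly in $n$, as $s \uparrow \tau$; the truncated convergence and $- \int_s^\tau \d S/G_- \downarrow 0$ then complete part~(a). I expect this to be the crux, since near $\tau$ the estimator $\wh G_n$ is frozen at $\wh G_n(\wh T_n-)$, of order $n^{-1}$, so it cannot be compared with $G_-$ pointwise. The route I would take is: (i) the Stute--Wang strong law under censorship \citep{stute93}, applied to the weight $u \mapsto \b 1\{u > s\}/G(u-)$ --- which is $F$-integrable precisely by \eqref{cond:weak_cens} --- gives $- \int_s^\tau \d\wh S_n/G_- \to - \int_s^\tau \d S/G_-$ a.s.; and (ii) a uniform-in-probability ratio bound $\sup_{u \in [0,\tau)} G(u-)/\wh G_n(u-) = O_{\P}(1)$, proved by a reverse-time supermartingale maximal inequality in which \eqref{cond:weak_cens} is used essentially, which turns (i) into a bound on $- \int_s^\tau \d\wh S_n/\wh G_{n-}$. (Alternatively, one may follow \citet{gill83} and split $\tfrac1n \int_s^\tau \wh G_{n-}^{-2}\,\d N_n$, with $N_n(u) = \sum_i \delta_i \b 1\{X_i \le u\}$, into a local-martingale part handled by Lenglart's inequality and the compensator $\int_s^{\wh T_n} (\wh S_{n-}/\wh G_{n-})\,\d A$.)

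For part~(b), writing $\wh\sigma_n^2(t) := \int_0^t \d\wh A_n/\wh H_{n-}$ so that $\wh\Gamma_n(u,v) = \wh S_n(u)\wh S_n(v)\,\wh\sigma_n^2(u \wedge v)$ and $\Gamma(u,v) = S(u)S(v)\,\sigma^2(u \wedge v)$, the point is that $\sigma^2$ is unbounded on $[0,\tau)$ (indeed $\sigma^2(\tau) = \infty$), so the supremum must be split at some $s$ close to $\tau$. Given $\varepsilon > 0$: using $\Gamma(u,v) \le \Gamma(u \wedge v, u \wedge v)$ and the elementary bound $\Gamma(w,w) \le - \int_{s'}^w \d S/G_- + (S(w)/S(s'))^2\,(- \int_0^\tau \d S/G_-)$ for $s' < w < \tau$, one checks from \eqref{cond:weak_cens} and $S(w) \to 0$ that $\sup_{w \in [s,\tau)} \Gamma(w,w) \to 0$ and $- \int_s^\tau \d S/G_- \to 0$ as $s \uparrow \tau$, and I would fix $s$ so that both are $< \varepsilon/8$. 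On $\{ u \wedge v \le s \}$, where $H(s) = S(s)G(s) > 0$ by \eqref{cond:weak_cens}, the Glivenko--Cantelli theorem for $\wh H_n$ on $[0,\tau]$ and the uniform consistency of the Nelson--Aalen estimator $\wh A_n$ on $[0,s]$ give, via a similar integration-by-parts estimate, $\sup_{[0,s]} | \wh\sigma_n^2 - \sigma^2 | \oPo 0$, whence (using $\sigma^2(u \wedge v) \le \sigma^2(s) < \infty$ and $\| \wh S_n - S \|_\infty \oPo 0$) $\sup_{u \wedge v \le s} | \wh\Gamma_n - \Gamma | \oPo 0$.

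On $\{ u \wedge v > s \}$ both $\Gamma$ and $\wh\Gamma_n$ are nonnegative, $\Gamma \le \sup_{w \ge s} \Gamma(w,w) < \varepsilon/8$, and for $u \le v$, using $\d\wh A_n = - \d\wh S_n/\wh S_{n-}$, $\wh H_{n-} = \wh S_{n-}\wh G_{n-}$ and $\wh S_n(u) \le \wh S_{n-}(v')$ for $v' \le u$,
\[
  \wh\Gamma_n(u,v) = \wh S_n(u)\wh S_n(v)\,\wh\sigma_n^2(s) + \wh S_n(u)\wh S_n(v) \int_s^u \frac{\d\wh A_n}{\wh H_{n-}}
  \le \wh S_n(s)^2\,\wh\sigma_n^2(s) + \Big( - \int_s^\tau \frac{\d\wh S_n}{\wh G_{n-}} \Big).
\]
By part~(a) the last summand converges in probability to $- \int_s^\tau \d S/G_- < \varepsilon/8$, while $\wh S_n(s)^2\,\wh\sigma_n^2(s) \oPo \Gamma(s,s) < \varepsilon/8$ by the previous region; hence $\sup_{u \wedge v > s} \wh\Gamma_n < \varepsilon/2$ and therefore $\sup_{u \wedge v > s} | \wh\Gamma_n - \Gamma | < \varepsilon$ with probability tending to one. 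Combined with the first region this proves part~(b). In short, all of the real difficulty sits in the tail estimate of part~(a).
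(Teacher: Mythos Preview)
Your proposal is correct and follows essentially the paper's route. For part~(a) the paper, too, establishes the truncated convergence on $[t,s]$, $s<\tau$, and then controls the tail $-\int_s^\tau \d\wh S_n/\wh G_{n-}$ by replacing $1/\wh G_{n-}$ with $\beta^{-2}/G_-$ and invoking Stute's (1993) strong law on $-\int_s^\tau \d\wh S_n/G_-$; the only difference is packaging: the paper derives the ratio bound by writing $1/\wh G_{n-}=\wh S_{n-}/\wh H_{n-}$ and combining Gill's (1983) Lemmata~2.6 and~2.7 (Doob's inequality for the martingale $\wh S_n/S$ and Wellner's order-statistic inequality for $H_-/\wh H_{n-}$), neither of which actually requires~\eqref{cond:weak_cens}---that condition enters only through the finiteness of $-\int \d S/G_-$, not through the ratio bound itself. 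For part~(b) the paper likewise splits near $\tau$ (phrased via Billingsley's Theorem~3.2) and bounds the near-$\tau$ contribution of $\wh\Gamma_n$ by exactly your term $-\int_s^\tau \d\wh S_n/\wh G_{n-}$ together with a second piece handled again by Gill's lemmata plus Stute; your direct bound $\wh\Gamma_n(u,v)\le \wh\Gamma_n(s,s)+(-\int_s^\tau \d\wh S_n/\wh G_{n-})$ is a slightly cleaner organization of the same estimate.
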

%
%
%{\color{red}cp notation ueberhaupt noetig??}
%Since only $(0 \rightarrow 1)$-transitions are of interest in this chapter,
%we simplify the counting process notation to $N(t) := \sum_{i=1}^n N_i(t) := \sum_{i=1}^n N_{01;i}(t)$,
%$Y(t) := Y_0(t)$, $J(t) := J_0(t)$ and $\widehat \sigma^2 := \widehat \sigma_{01}^2$.

\section{Main Results}
\label{sec:EBS_KME_WL_MAIN}

The limit distribution of the Kaplan-Meier process in Lemma~\ref{lem:KME_WL} shall be assessed via bootstrapping.
To this end, we independently draw $n$ times with replacement from $(X_1,\delta_1), \dots, (X_n,\delta_n)$
and denote the thus obtained bootstrap sample by $(X_1^*, \delta_1^*), \dots, (X_n^*,\delta_n^*)$.
Throughout, denote by $\Gamma^*_n$, $S^*_n$ etc. the obvious estimators but based on the bootstrap sample.
Note that this requires a discontinuous extension of the above quantities.
The following theorem is the basis of all later inference methods.
\begin{thm}
\label{thm:ebs_kme_wl}
Let $B$ denote a Brownian motion on $[0,\tau]$ and suppose that \eqref{cond:weak_cens} holds.
 Then we have, conditionally on $X_1, X_2, \dots $,
 $$ \sqrt{n} (S_n^* - \widehat S_n) \oDo W = S \cdot (B \circ \sigma^2) $$
 on $D[0,\tau]$ in probability.
\end{thm}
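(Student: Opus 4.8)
The plan is to establish the conditional weak convergence on each fixed interval $[0,t]$ with $t<\tau$ first, and then to upgrade to the whole interval $[0,\tau]$ by controlling the bootstrap process near the right endpoint. On $D[0,t]$ for $t<\tau$ the conditional weak convergence $\sqrt n(S_n^*-\wh S_n)\oDo W$ (in probability) is essentially classical: it follows from the functional delta method applied to the product-limit map, together with the conditional weak convergence of the bootstrapped Nelson--Aalen process (equivalently, the bootstrapped empirical sub-survival functions), which in turn is a consequence of the conditional multiplier/empirical-bootstrap CLT for the i.i.d.\ pairs $(X_i,\delta_i)$; this is the content of the references cited in the introduction (\citealp{akritas86}, \citealp{lo86}, \citealp{vaart96}). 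I would quote this as the starting point, so that the only real work is the behaviour on $[t,\tau]$ as $t\uparrow\tau$.

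The key step is then a tightness-type estimate near $\tau$: I must show that for every $\varepsilon,\eta>0$ there is a $t<\tau$ such that, with (unconditional) probability tending to one,
\begin{align*}
 \P^*\Big( \sup_{t \le s \le \tau} \sqrt n\,|S_n^*(s) - \wh S_n(s)| > \varepsilon \Big) < \eta,
\end{align*}
where $\P^*$ denotes the bootstrap (conditional) probability. Writing the bootstrapped Kaplan--Meier estimator via the Duhamel equation as $S_n^* - \wh S_n = -\wh S_n^* \cdot \int_0^\cdot \frac{\wh S_{n-}}{\wh S_{n-}^*}\, \d(A_n^* - \wh A_n)$ (the bootstrap analogue of the identity underlying Lemma~\ref{lem:KME_WL}), the increment of $S_n^*-\wh S_n$ over $[t,\tau]$ is controlled by the increment of the bootstrapped Nelson--Aalen martingale $\sqrt n(A_n^* - \wh A_n)$ over $[t,\tau]$, up to factors $\wh S_{n-}/\wh S_{n-}^*$ and $\wh S_n^*$ that I will bound uniformly. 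The crucial ingredient is a Lenglart / Rebolledo-type inequality applied conditionally to the bootstrap counting-process martingale, whose predictable variation over $[t,\tau]$ is, to leading order, $\int_t^\tau \frac{\d \wh A_n}{\wh H_{n-}}$. By Lemma~\ref{lem:VAR_KME_WL}(a)--(b) this quantity converges in probability to $\int_t^\tau \frac{\d A}{H_-} = \sigma^2(\tau) - \sigma^2(t)$, which is finite under \eqref{cond:weak_cens} and hence can be made arbitrarily small by choosing $t$ close to $\tau$. This mirrors exactly Gill's use of \eqref{cond:weak_cens} for ``a vanishing upper bound in Lenglart's inequality'' mentioned in the introduction, now carried out conditionally on the data.

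I would then combine the two pieces in the standard way: finite-dimensional convergence plus asymptotic tightness on $[0,\tau]$ give conditional weak convergence on $D[0,\tau]$ in probability, with the limit identified as $W = S\cdot(B\circ\sigma^2)$ by matching the limiting covariance to $\Gamma$ via Lemma~\ref{lem:VAR_KME_WL}(b). The main obstacle I anticipate is the endpoint analysis: near $\tau$ the denominators $\wh S_{n-}^*$ (and $n - i + 1$ in the bootstrap order statistics) become small, so I must argue carefully that the ratio $\wh S_{n-}/\wh S_{n-}^*$ stays bounded on the relevant range with high conditional probability — presumably by first restricting to $[0,\wh T_n^*]$, exploiting that $\wh T_n^* \le \wh T_n$, and using the uniform consistency of $\wh S_n^*$ for $S$ that comes out of the $[0,t]$ analysis — and that the discontinuous (bootstrap) extensions of all functionals do not spoil the delta-method argument. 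Handling this uniformly-in-$[t,\tau]$ while keeping everything conditional on $X_1,X_2,\dots$ is the technically delicate part; everything else is a bootstrap transcription of the Gill--Ying arguments already recorded in Lemma~\ref{lem:KME_WL}.
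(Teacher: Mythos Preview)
Your overall architecture matches the paper's, but the Lenglart step as you describe it will not close. You write that the conditional predictable variation over $[t,\tau]$ is, to leading order, $\int_t^\tau \d\wh A_n/\wh H_{n-}\to\sigma^2(\tau)-\sigma^2(t)$ and that this is finite under~\eqref{cond:weak_cens}. It is not: \eqref{cond:weak_cens} only says $-\int_0^\tau \d S/G_-<\infty$, whereas $\sigma^2(\tau)=\int_0^\tau \d A/H_-=-\int_0^\tau \d S/(S^2G_-)$, which is typically infinite because $S(\tau)=0$ (take $G\equiv1$ for the simplest counterexample). What does vanish at $\tau$ is $\Gamma(t,t)=S^2(t)\sigma^2(t)$, not $\sigma^2(\tau)-\sigma^2(t)$. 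Hence you cannot ``bound the factor $\wh S_n^*$ uniformly'' by $1$ and then apply Lenglart to the bare bootstrapped Nelson--Aalen martingale; the survival-function weight has to remain \emph{inside} the stochastic integral. The paper achieves this via a bootstrap version of Gill's Lemma~2.9, which gives $\sup_{t\le s<\wh T_n}\wh S_n(s\wedge T_n^*)\,|M_n^*(s)-M_n^*(t)|\le 2\sup_s\big|\int_t^s \wh S_n\,\d M_n^*\big|$; Lenglart applied to $\int \wh S_n\,\d M_n^*$ then yields a predictable variation $\int_t^\tau S_{n-}^{*2}(1-\Delta\wh A_n)\,\d\wh A_n/H_{n-}^*$, which after the ratio bounds collapses to $-\int_t^\tau \d\wh S_n/\wh G_{n-}\to-\int_t^\tau \d S/G_-$ by Lemma~\ref{lem:VAR_KME_WL}(a). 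That is the quantity~\eqref{cond:weak_cens} actually controls.

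Two further points you gloss over also need real arguments. First, the ratios $S_n^*/\wh S_n$ and $\wh H_n/H_n^*$ near $\tau$ are \emph{not} handled by uniform consistency on compacta; one needs conditional Doob/Wellner-type inequalities (bootstrap analogues of Gill's Lemmas~2.6--2.7), valid despite the ties in the bootstrap sample, to get $P(S_n^*\le\beta^{-1}\wh S_n\text{ on }[0,T_n^*]\mid\b X)\ge 1-\beta$ and the corresponding bound for $H_n^*$. Second, the whole martingale argument only covers $[0,T_n^*)$; the remainder $\sup_{T_n^*\le s\le\tau}\sqrt n\,|S_n^*-\wh S_n|(s)\le \sqrt n\,S_n^*(T_n^*)+\sqrt n\,\wh S_n(T_n^*)$ requires a separate argument that $\sqrt n\,\wh S_n(T_n^*)\to 0$ in conditional probability, which the paper obtains by a quantile-transformation count of how many $X_i$ exceed $\wh S_n^{-1}(\varepsilon/\sqrt n)$.
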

Many statistical applications involve a consistent variance estimator,
e.g. Hall-Wellner or equal precision confidence bands for $S$; cf. \cite{abgk93}, p. 266.
In order to asymptotically reproduce the same limit on the bootstrap side,
the uniform consistency of a bootstrapped variance estimator (defined on the whole support $[0,\tau]^2$ of the covariance function) needs to be verified.
To this end, introduce the bootstrap version of $\wh \Gamma_n$, that is,
$$ \Gamma_n^*(u,v) = S_n^*(u) \Big( \int_0^{u \wedge v} \frac{\d A_n^*}{H_{n-}^*} \Big) S_n^*(v). $$
For all $\varepsilon > 0$, its uniform consistency (here and below always meaning 
conditional convergence in probability given $X_1, X_2, \dots$ in probability)
over all points $ (u,v) \in [0,\tau]^2 \setminus [\tau-\varepsilon, \tau]^2 $
is an immediate consequence of Theorem~\ref{thm:ebs_kme_wl} in combination with the continuous mapping theorem:
Write the absolute value of the integral part minus its estimated counterpart as
\begin{align*}
  & \Big| \int_0^{u \wedge v} \frac{(\widehat H_{n-} - H^*_{n-})\d A_n^* - H^*_{n-}\d (\widehat A_n - A_n^*)}{H_{n-}^* \widehat H_{n-}} \Big| \\
  & \leq \frac{\sup_{(0,u\wedge v)}|\widehat H_{n} - H^*_{n}|}{H^*_{n}((u \wedge v)-) \widehat H_{n}((u \wedge v)-)}  A_n^*(u \wedge v)
  + \Big| \int_0^{u \wedge v} \frac{\d (\widehat A_n - A_n^*)}{\widehat H_{n-}} \Big|.
\end{align*}
The first term is asymptotically negligible due to P\`olya's theorem
and the second term becomes small due to the continuous mapping theorem applied to the integral functional
and the logarithm functional.
Here the restriction to $[0,\tau]^2 \setminus [\tau-\varepsilon, \tau]^2 $ simplified the calculations 
since all denominators are asymptotically bounded away from zero.

For uniform consistency on the whole rectangle $[0,\tau]^2$, however, 
similar arguments as for the bootstrapped Kaplan-Meier process on $[0,\tau]$ are required.
Compared to~\eqref{cond:weak_cens}, we postulate a slightly more restrictive censoring condition.

\begin{lemma}
 \label{lem:ebs_kme_var_wl}
 Suppose that
 \begin{align}
 \label{cond:weaker_cens}
  - \int_0^\tau \frac{\d S}{G_-^3} < \infty.
 \end{align}
Then we have the following conditional uniform consistency given $X_1, X_2, \dots$ in probability:
   \begin{align}
   \label{eq:unif_conv_cov_est_WL}
      \sup_{(u,v) \in [0, \tau]^2} | \Gamma^*_n (u,v) - \widehat \Gamma_n(u,v) | \oPo 0 \quad \text{in probability as } n \rightarrow \infty.
   \end{align}
\end{lemma}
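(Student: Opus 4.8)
The plan is to reduce the supremum over $[0,\tau]^2$ to one over the diagonal, to split the diagonal at a point $\tau-\varepsilon$, and to show that the ``corner'' contribution on $[\tau-\varepsilon,\tau]$ can be made uniformly (in $n$) small by taking $\varepsilon$ small; this last step is where the strengthened censoring condition~\eqref{cond:weaker_cens} enters. Since $S_n^*$ is non-increasing and $u\mapsto\int_0^u\d A_n^*/H_{n-}^*$ is non-decreasing, $|\Gamma_n^*(u,v)|=S_n^*(u)S_n^*(v)\int_0^{u\wedge v}\d A_n^*/H_{n-}^*\le\Gamma_n^*(u\wedge v,u\wedge v)$, and the same holds with $\widehat\Gamma_n$ or $\Gamma$ in place of $\Gamma_n^*$; hence it suffices to control the diagonal processes $t\mapsto\Gamma_n^*(t,t)$ and $t\mapsto\widehat\Gamma_n(t,t)$. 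Fix $\varepsilon>0$. On $[0,\tau-\varepsilon]$ all relevant denominators stay bounded away from zero with (conditional) probability tending to one, so the argument preceding the statement --- P\'olya's theorem, the continuous mapping theorem and Theorem~\ref{thm:ebs_kme_wl} applied to the integral and logarithm functionals --- yields $\sup_{t\le\tau-\varepsilon}|\Gamma_n^*(t,t)-\widehat\Gamma_n(t,t)|\oPo0$ conditionally, in probability, for every fixed $\varepsilon$. It remains to make $\sup_{t\in[\tau-\varepsilon,\tau]}\widehat\Gamma_n(t,t)$ and $\sup_{t\in[\tau-\varepsilon,\tau]}\Gamma_n^*(t,t)$ smaller than any prescribed $\eta>0$ by choosing $\varepsilon$ small.

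For $\widehat\Gamma_n$ this is immediate from Lemma~\ref{lem:VAR_KME_WL}(b): we have $\sup_{(u,v)}|\widehat\Gamma_n(u,v)-\Gamma(u,v)|\oPo0$, while $\Gamma$ extends continuously to $[0,\tau]^2$ with $\Gamma(u,\tau)=S(u)S(\tau)\sigma^2(u)=0$ for all $u$ (using $S(\tau)=0$ and $\sigma^2(u)<\infty$ for $u<\tau$ under~\eqref{cond:weak_cens}); hence $\Gamma(\tau,\tau)=0$ and $\sup_{t\in[\tau-\varepsilon,\tau]}\Gamma(t,t)\to0$ as $\varepsilon\to0$, which together with the uniform convergence settles this term.

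The main work is the bound on $\sup_{t\in[\tau-\varepsilon,\tau]}\Gamma_n^*(t,t)$, for which I would reuse the conditional martingale machinery of the proof of Theorem~\ref{thm:ebs_kme_wl}. Conditionally on the data, $A_n^*-\widehat A_n$ equals, up to the largest bootstrap observation, $\int_0^{\,\cdot}\d M_n^*/Y_n^*$ for a conditional square-integrable martingale $M_n^*$ whose predictable variation is dominated by $\int_0^{\,\cdot}Y_n^*\,\d\widehat A_n$, with $Y_n^*=nH_{n-}^*$. Splitting $\d A_n^*/H_{n-}^*=\d\widehat A_n/H_{n-}^*+\d(A_n^*-\widehat A_n)/H_{n-}^*$ in $\sigma_n^{*2}$, and recalling $\Gamma_n^*(t,t)=S_n^*(t)^2\sigma_n^{*2}(t)$, the martingale part is, up to negligible jump corrections, the weighted conditional martingale
\begin{align*}
 Y_n(t) &:= \int_0^t \frac{S_{n-}^*(s)^2}{H_{n-}^*(s)}\,\d(A_n^*-\widehat A_n)(s), \\
 \langle Y_n\rangle(t) &\le \frac1n\int_0^t \frac{S_{n-}^*(s)^{4}}{H_{n-}^*(s)^{3}}\,\d\widehat A_n(s)
   = \frac1n\int_0^t \frac{S_{n-}^*(s)}{G_{n-}^*(s)^{3}}\,\d\widehat A_n(s),
\end{align*}
using $H_{n-}^*=S_{n-}^*G_{n-}^*$. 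Since $S_{n-}^*\,\d\widehat A_n$ and $G_{n-}^*$ are uniformly consistent for $-\d S$ and $G_-$, the increment $n\,[\langle Y_n\rangle(\tau)-\langle Y_n\rangle(\tau-\varepsilon)]$ is, with conditional probability close to one, at most a fixed multiple of $-\int_{\tau-\varepsilon}^\tau\d S/G_-^3$, which is finite by~\eqref{cond:weaker_cens} and vanishes as $\varepsilon\to0$; this is the role of the cube --- the three powers of $H_{n-}^*$ in the denominator of $\langle Y_n\rangle$ (one from $\sigma_n^{*2}$ itself, two more from squaring the integrand and from the quadratic variation of the bootstrap martingale) cancel the $S_{n-}^{*4}$ accumulated in the numerator down to $S_{n-}^*/G_{n-}^{*3}$, and $S_{n-}^*\,\d\widehat A_n\approx-\d S$. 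A conditional version of Lenglart's inequality then gives $\sup_{t\in[\tau-\varepsilon,\tau]}|Y_n(t)-Y_n(\tau-\varepsilon)|\oPo0$, uniformly in $\varepsilon$.

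For the remaining ``drift'' part $S_n^*(t)^2\int_0^t\d\widehat A_n/H_{n-}^*$ one compares $H_{n-}^*$ with $\widehat H_{n-}$ and $S_n^*$ with $\widehat S_n$ exactly as in the displayed inequality before the statement, now retaining the factor $S_n^*(t)^2$; the resulting errors are either of the already treated bulk type or are again dominated by $-\int_{\tau-\varepsilon}^\tau\d S/G_-^3$ and by the boundary value $\widehat\Gamma_n(\tau-\varepsilon,\tau-\varepsilon)\to\Gamma(\tau-\varepsilon,\tau-\varepsilon)$. The hardest point is making this precise: near $\tau$ the bootstrap at-risk fraction $H_{n-}^*$ can be as small as $1/n$, so no denominator is bounded below, and the linearisation remainders as well as the relative deviations of $H_{n-}^*$ and $G_{n-}^*$ from their data-dependent means must be controlled by the same weighted-martingale and conditional-moment estimates rather than by the continuous mapping theorem; moreover all weights entering the conditional Lenglart inequality must be predictable with respect to the bootstrap filtration (hence one works with $S_{n-}^*$, or with the data-measurable $\widehat S_n$, in place of $S_n^*$), and one must keep in mind that $\sigma_n^{*2}$ puts no mass beyond the largest bootstrap observation. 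Granting these points, choosing $\varepsilon=\varepsilon(\eta)$ so that $-\int_{\tau-\varepsilon}^\tau\d S/G_-^3<\eta$ and $\Gamma(\tau-\varepsilon,\tau-\varepsilon)<\eta$ yields~\eqref{eq:unif_conv_cov_est_WL}.
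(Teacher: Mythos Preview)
Your proposal follows essentially the same route as the paper: split off the corner $[\tau-\varepsilon,\tau]^2$, invoke the pre-lemma argument on its complement, and control the corner by a conditional martingale decomposition of $A_n^*-\widehat A_n$ whose predictable variation produces the $G_-^3$ integral. The core computation---that the weight $S_{n-}^{*2}/H_{n-}^*$ squared and divided by $Y^*$ collapses to $S_{n-}^*/(nG_{n-}^{*3})$---is exactly the paper's.

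Two points of comparison. First, the paper does not work with $\Gamma_n^*(t,t)$ directly but instead mirrors the proof of Lemma~\ref{lem:VAR_KME_WL}(b): it reduces the corner estimate to the single quantity $-\int_{u\wedge v}^\tau \d S_n^*/G_{n-}^*=\int_{u\wedge v}^\tau (S_{n-}^*/G_{n-}^*)\,J^*\,\d A_n^*$, splits $\d A_n^*=\d(A_n^*-\widehat A_n)+\d\widehat A_n$, and then applies Rebolledo's theorem (rather than Lenglart) to kill the martingale part. This avoids the bookkeeping you allude to when moving the outside factor $S_n^*(t)^2$ inside the integral.

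Second---and this is the place where your sketch is genuinely incomplete---the sentence ``since $S_{n-}^*\,\d\widehat A_n$ and $G_{n-}^*$ are uniformly consistent for $-\d S$ and $G_-$'' is not a valid step near $\tau$: you need a \emph{ratio} bound, not a uniform one, because $1/G_{n-}^{*3}$ appears. The paper handles precisely this by a \emph{two-fold} application of Lemmata~\ref{lem:gill83_2.6} and~\ref{lem:gill83_2.7}: first replace $S_n^*,H_n^*$ by $\widehat S_n,\widehat H_n$ (up to constants, with high conditional probability), then replace $\widehat S_n,\widehat H_n$ by $S,H$; after using $\widehat S_{n-}\,\d\widehat A_n=-\d\widehat S_n$ this yields the bound $-\beta^{-13}n^{-1}\int \d\widehat S_n/G_-^3$, and only then does Stute's theorem give the limit $-\int \d S/G_-^3$. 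You correctly flag this as ``the hardest point'', but the concrete mechanism is the iterated Gill bounds, not consistency plus continuous mapping. Also, your opening reduction ``it suffices to control the diagonal processes'' is true only for the corner term (via $|\Gamma_n^*(u,v)|\le\Gamma_n^*(u\wedge v,u\wedge v)$ and the triangle inequality); on the complement of the corner you must quote the full off-diagonal pre-lemma argument, not just its diagonal consequence.
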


\begin{rem}
 The proof of Lemma~\ref{lem:ebs_kme_var_wl} shows that Condition~\eqref{cond:weaker_cens}
 can be diminished to
 \begin{align*}
  - \int_0^\tau \frac{\d S}{G_-} - \int_0^\tau  \frac{S^{1-\delta} \d S}{G_-^{2+ \delta}} < \infty
 \end{align*}
 for some $\delta \in (0,1)$.
 This is due to the inequality $(n \widehat H_{n-}^3)^{-1} \leq ( n^\delta \widehat H_{n-}^{2+\delta})^{-1} $.
\end{rem}

\section{Applications}
\label{sec:EBS_KME_APP}

Applications of Theorem~\ref{thm:ebs_kme_wl} concern confidence intervals for the mean residual life-time $g(t) = \E[T-t \ | \ T > t ]$ on compact sub-intervals $[t_1,t_2] \subset [0,\tau)$ in case of $\tau < \infty$ as well as confidence regions for the Lorenz curve $L$ and the Gini index $G$.
To this end, we apply the functional delta-method (e.g. \citealp{abgk93}, Theorem~II.8.1) which in turn requires the Hadamard-differentiability of all involved statistical functionals.

\subsection*{Confidence Bands for the Mean-Residual Lifetime Function}

\iffalse

Konfidenzbaender auf der ganzen Linie sind nicht sinnvoll, da fuer kleine oder grosse Zeitpunkte der KME gleich null ist (bzw. sein kann).
Spaeter in einer Bemerkung festhalten.

\subsection{Inference for the Whole Survival Curve}
The present theory offers two different ways of constructing time-simultaneous confidence bands for the survival curve on some interval $[t_1,t_2] \subset [0,\tau]$, $0 \leq t_1 \leq t_2\leq \tau$:
either we employ quantiles of the Brownian motion or quantiles of the resampled Kaplan-Meier estimator.
We start by following the first approach which is exemplified in \cite{abgk93}, p. 265ff.

\subsection{Inference for Functionals of the Survival Curve}

\fi

Let $0 \leq t_1 \leq t_2$ and introduce the space $C[t_1,\tau]$ of continuous functions on $[t_1,\tau]$ equipped with the supremum norm
as well as the subset 
$$\widetilde C[t_1,t_2] = \{ f \in C[t_1,\tau] : \inf_{s \in [t_1,t_2]} |f(s)| > 0 \} \subset C[t_1,\tau]$$
containing all continuous functions having a positive distance to the constant zero function on the interval $[t_1,t_2]$.
Similarly, let 
$$\widetilde D[t_1,t_2] = \{ f \in D[t_1,\tau] : \inf_{s \in [t_1,t_2]} |f(s)| > 0, \sup_{s \in [t_1,\tau]} |f(s)| < \infty \} \subset D[t_1,\tau]$$
be the extension of $\widetilde C[t_1,t_2]$ to possibly discontinuous, bounded c\`adl\`ag functions.
\iffalse
Further, let
$$ C_{BV1}[0,\tau] = \{ f \in C[0,\tau] : \int_0^\tau | \d f | \leq 1 \} $$
be the space of continuous functions on $[0,\tau]$ with total variation bounded by 1.
Finally, introduce the function space of bounded c\`adl\`ag functions, i.e.
$$ D_{< \infty}[0,\tau] = \{ f \in D[0,\tau] : \sup_{s \in [0,\tau]} |f(s)| < \infty \} \subset D[0,\tau].$$
\fi
For the notion of Hadamard-differentiability tangentially to subsets of $D[t_1,\tau]$,
see Definition~II.8.2, Theorem~II.8.2 and Lemma~II.8.3 in \cite{abgk93}, p. 111f.
The following lemma makes the functional delta-method available for applications to the mean residual life-time function.
\begin{lemma}
\label{lem:hadamard_ex}
    Let $\tau < \infty$ and $[t_1,t_2] \subset [0,\tau)$ be a compact interval. 
    Then $$\psi : \widetilde D [t_1,t_2] \rightarrow D[t_1,t_2], 
    \quad \theta(\cdot) \mapsto \frac{1}{\theta(\cdot)} \int_{\cdot}^\tau \theta(s) \d s$$
    is Hadamard-differentiable at each $\theta \in \widetilde C[t_1,t_2]$ tangentially to $C^2[t_1,\tau]$
    with continuous linear derivative $\d \psi(\theta) \cdot h \in D[t_1,t_2]$ given by
 \begin{align*}
  ( \d \psi(\theta) \cdot h )(s) := \frac{1}{\theta(s)} \int_s^\tau h(u) \d u - h(s) \int_s^\tau \frac{\theta(u)}{\theta^2(s)} \d u.
 \end{align*}
\end{lemma}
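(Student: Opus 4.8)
The plan is to exhibit $\psi$ as a composition of three elementary maps, each Hadamard-differentiable by a routine argument, and then to invoke the chain rule for Hadamard-differentiable maps (see \cite{abgk93}, Sect.~II.8). Note first that $\psi$ indeed maps $\widetilde D[t_1,t_2]$ into $D[t_1,t_2]$, since $1/\theta(\cdot)$ is a bounded c\`adl\`ag function on $[t_1,t_2]$ and $\int_\cdot^\tau\theta$ is continuous. Now put
\begin{align*}
 A(\theta)(\cdot) = \int_\cdot^\tau \theta(u)\,\d u, \qquad R(\theta)(\cdot) = \frac{1}{\theta(\cdot)}, \qquad m(a,b)(\cdot) = a(\cdot)\, b(\cdot),
\end{align*}
so that $\psi(\theta) = m\big(R(\theta),A(\theta)\big)$, where in the last product both arguments are restricted to $[t_1,t_2]$.

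First I would treat $A$ and $m$, which are the easy maps. Since $\tau<\infty$ and every $\theta \in \widetilde D[t_1,t_2]$ is bounded and c\`adl\`ag, hence Riemann-integrable on $[t_1,\tau]$, the operator $A$ is well defined and $A(\theta)$ is even Lipschitz on $[t_1,\tau]$ with $\|A(\theta)\|_\infty \le (\tau-t_1)\|\theta\|_\infty$; thus $A$ is a bounded \emph{linear} operator from $D[t_1,\tau]$ (with the supremum norm) into $C[t_1,\tau]$, hence Fr\'echet- and a fortiori Hadamard-differentiable everywhere with $\d A(\theta)\cdot h = A(h)$. The pointwise product $m$ is bilinear and bounded on bounded sets, $\|ab\|_\infty \le \|a\|_\infty\|b\|_\infty$; from $(a+tg_t)(b+tk_t) - ab = t(a k_t + g_t b) + t^2 g_t k_t$ and the uniform boundedness of the approximating sequences one reads off that $m$ is Hadamard-differentiable at every $(a,b)$, tangentially to the whole product space, with $\d m(a,b)\cdot(g,k) = a k + g b$.

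The only genuinely delicate piece is the reciprocal $R$, and this is exactly where the hypothesis $\inf_{s\in[t_1,t_2]}|\theta(s)|>0$ is used. Fix $\theta \in \widetilde C[t_1,t_2]$ and set $c := \inf_{[t_1,t_2]}|\theta| > 0$. For $h \in C^2[t_1,\tau]$ and any $h_t \to h$ uniformly with $\theta + t h_t \in \widetilde D[t_1,t_2]$ --- which holds automatically for small $t>0$, since then $|\theta + t h_t| \ge c/2$ on $[t_1,t_2]$ --- the elementary identity
\begin{align*}
 \frac{1}{t}\Big( \frac{1}{\theta + t h_t} - \frac{1}{\theta} + \frac{t h}{\theta^2} \Big) = \frac{(h - h_t)\,\theta + t\, h\, h_t}{\theta^2 (\theta + t h_t)}
\end{align*}
holds pointwise on $[t_1,t_2]$, and its right-hand side tends to $0$ uniformly on $[t_1,t_2]$ because the denominator is bounded away from $0$ uniformly and $\|h_t - h\|_\infty \to 0$. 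Hence $R$ is Hadamard-differentiable at $\theta$ tangentially to $C^2[t_1,\tau]$ with $\d R(\theta)\cdot h = -h/\theta^2$, a continuous function whenever $h$ is continuous.

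It remains to assemble the pieces. The map $\theta \mapsto (R(\theta),A(\theta))$ is Hadamard-differentiable at $\theta \in \widetilde C[t_1,t_2]$ tangentially to $C^2[t_1,\tau] \subset C[t_1,\tau]$, with derivative $h \mapsto (-h/\theta^2, A(h))$ whose range consists of pairs of continuous functions; since $m$ is differentiable tangentially to the entire product space, the chain rule applies and yields
\begin{align*}
 \d\psi(\theta)\cdot h = R(\theta)\,\big(\d A(\theta)\cdot h\big) + \big(\d R(\theta)\cdot h\big)\,A(\theta) = \frac{1}{\theta}\int_\cdot^\tau h(u)\,\d u - \frac{h}{\theta^2}\int_\cdot^\tau \theta(u)\,\d u,
\end{align*}
which is precisely the asserted formula. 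Linearity of $h \mapsto \d\psi(\theta)\cdot h$ is obvious, and since $\theta$ is continuous and bounded away from zero on $[t_1,t_2]$ this derivative is continuous and, in particular, lies in $D[t_1,t_2]$. The main obstacle is the reciprocal step: one must make sure that the perturbed functions $\theta + t h_t$ stay \emph{uniformly} bounded away from $0$ on $[t_1,t_2]$, so that all denominators are controlled and the difference quotient converges in the supremum norm; everything else is routine chain-rule bookkeeping.
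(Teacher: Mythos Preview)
Your argument is correct. The paper, however, takes a different route: it verifies Hadamard-differentiability \emph{directly} by expanding
\[
\frac{1}{t_n}\bigl[\psi(\theta+t_n h_n)(s)-\psi(\theta)(s)\bigr]-(\d\psi(\theta)\cdot h)(s)
\]
algebraically into four terms, two of which carry an explicit factor $t_n$ (and vanish because the denominators $\theta(s)(\theta(s)+t_nh_n(s))$ stay bounded away from zero on $[t_1,t_2]$) and two of which are controlled by $\|h_n-h\|_\infty$ together with $\tau<\infty$. In other words, the paper does one monolithic sup-norm estimate, whereas you factor $\psi=m\circ(R,A)$ and invoke the chain rule after checking each piece separately. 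Your decomposition makes the structure of the derivative transparent and isolates exactly where the hypothesis $\inf_{[t_1,t_2]}|\theta|>0$ enters (namely, only in the reciprocal map $R$); the paper's direct calculation is more self-contained---it does not rely on an external chain-rule statement---but at the price of a longer pointwise expansion. Both arrive at the same derivative formula and both use the same two key facts: the uniform lower bound on $|\theta|$ over $[t_1,t_2]$ to control all denominators, and $\tau<\infty$ to bound the integrals $\int_s^\tau|h_n-h|\,\d u$.
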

As pointed out in \cite{gill89} or \cite{abgk93}, p. 110,
the functional delta-method is established on the functional space $D[t_1,\tau]$ (or subsets thereof) equipped with the supremum norm.
However, in case of limiting processes with continuous sample paths,
``weak convergence in the sense of the [Skorohod] metric and in the sense of the supremum norm are exactly equivalent'' \citep{abgk93}. See also Problem 7 in~\cite{pollard84}, p. 137.
The convergence result of Theorem~\ref{thm:ebs_kme_wl} combined with the functional $\psi$ of Lemma~\ref{lem:hadamard_ex} constitutes the following weak convergence.
\begin{lemma}
\label{lem:EBS_KME_APP}
 Suppose that~\eqref{cond:weak_cens} holds.
 On the Skorohod space $D[t_1,t_2]$ we then have
 \begin{align*}
  \sqrt{n} \Big( \int_\cdot^\tau \frac{\widehat S_n(u)}{\widehat S_n(\cdot)} \d u 
    - \int_\cdot^\tau \frac{S(u)}{S(\cdot)} \d u \Big) \oDo U
 \end{align*}
 and, given $X_1, X_2,\dots$,
 \begin{align*}
  \sqrt{n} \Big( \int_\cdot^\tau \frac{S_n^*(u)}{S_n^*(\cdot)} \d u 
    - \int_\cdot^\tau \frac{\widehat S_n(u)}{\widehat S_n(\cdot)} \d u \Big) \oDo U
 \end{align*}
  in outer probability. The Gaussian process $U$ has a.s. continuous sample paths,
  mean zero and covariance function
  \begin{align*}
   (r,s) \mapsto \int_{r \vee s}^\tau \int_{r \vee s}^\tau \frac{\Gamma(u,v)}{S(r)S(s)} \d u \d v
   - \sigma^2(r \vee s) g(r) g(s),
  \end{align*}
  where $g(t) = \E[T_1 - t \ | \ T_1 > t] =  \int_t^\tau \frac{S(u)}{S(t)} \d u$ is again the mean residual life-time function. %{\color{red}Zitate? Stute96 gibt eine Varianz im Semiparametrischen an...}
\end{lemma}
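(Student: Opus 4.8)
The plan is to obtain both displays from the functional delta-method applied to the map $\psi$ of Lemma~\ref{lem:hadamard_ex}, using Lemma~\ref{lem:KME_WL}(b) for the first statement and Theorem~\ref{thm:ebs_kme_wl} for the bootstrap one, and then to read off the limit by computing the covariance of $\d\psi(S)\cdot W$. (As in Lemma~\ref{lem:hadamard_ex}, take $\tau<\infty$.) First I would restrict every process to $[t_1,\tau]$ and observe that the random maps inside the two displays are precisely $\psi(\widehat S_n)$, $\psi(S)$ and $\psi(S_n^*)$, since $\psi(\theta)(\cdot)=\int_\cdot^\tau\theta(u)/\theta(\cdot)\,\d u$ and $\psi(S)=g$. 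Because $S$ is continuous, nonincreasing and $S(t_2)>0$ (as $t_2<\tau$), the restriction $S|_{[t_1,\tau]}$ lies in $\widetilde C[t_1,t_2]$, so $\psi$ is Hadamard-differentiable at $S$. To be allowed to form $\psi(\widehat S_n)$ and $\psi(S_n^*)$ I would check that both restrictions lie in the domain $\widetilde D[t_1,t_2]$ of $\psi$ with probability, resp.\ conditional probability in probability, tending to one: both estimators are nonincreasing and bounded by $1$, so their infimum over $[t_1,t_2]$ is attained at $t_2$, and $\widehat S_n(t_2)\to S(t_2)>0$ by the uniform consistency of the Kaplan-Meier estimator (\cite{wang87}, \cite{stute93}), while $S_n^*(t_2)\to S(t_2)>0$ conditionally in probability, in probability --- the latter because $\sqrt n(S_n^*-\widehat S_n)\oDo W$ on $D[0,\tau]$ (Theorem~\ref{thm:ebs_kme_wl}) forces $\sup_{[t_1,\tau]}|S_n^*-\widehat S_n|\oPo 0$ conditionally while $\widehat S_n\to S$ uniformly.

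Next I would apply the delta-method. The restriction map $D[0,\tau]\to D[t_1,\tau]$ being continuous, Lemma~\ref{lem:KME_WL}(b) gives $\sqrt n(\widehat S_n-S)\oDo W$ on $D[t_1,\tau]$; since $S$ and $\sigma^2$ are continuous, $W=S\cdot(B\circ\sigma^2)$ has a.s.\ continuous sample paths and hence lies a.s.\ in the tangent space $C[t_1,\tau]$ of Lemma~\ref{lem:hadamard_ex}. The functional delta-method (\cite{abgk93}, Theorem~II.8.1, in the supremum-norm formulation justified by the remark following Lemma~\ref{lem:hadamard_ex}) then yields $\sqrt n(\psi(\widehat S_n)-\psi(S))\oDo U:=\d\psi(S)\cdot W$ on $D[t_1,t_2]$, and $U$ has a.s.\ continuous paths because $\d\psi(S)$ sends continuous functions to continuous functions. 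For the bootstrap display I would invoke the conditional (bootstrap) version of the functional delta-method --- e.g.\ \cite{vaart96}, Theorem~3.9.11 --- which together with Theorem~\ref{thm:ebs_kme_wl} (restricted to $D[t_1,\tau]$) gives, conditionally on $X_1,X_2,\dots$ in outer probability, $\sqrt n(\psi(S_n^*)-\psi(\widehat S_n))\oDo\d\psi(S)\cdot W=U$; the same derivative occurs since the localisation point is $S$ in both cases.

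It remains to identify $U$. By Lemma~\ref{lem:hadamard_ex}, $U(s)=\frac{1}{S(s)}\int_s^\tau W(u)\,\d u-\frac{g(s)}{S(s)}W(s)$ (using $\int_s^\tau S(u)/S^2(s)\,\d u=g(s)/S(s)$), so $U$ is centred Gaussian with continuous sample paths. Writing $W=S\cdot(B\circ\sigma^2)$ one has $W(u)-\frac{S(u)}{S(s)}W(s)=S(u)\big((B\circ\sigma^2)(u)-(B\circ\sigma^2)(s)\big)$, hence $U(s)=\int_s^\tau\frac{S(u)}{S(s)}\big((B\circ\sigma^2)(u)-(B\circ\sigma^2)(s)\big)\,\d u$; substituting $\mathrm{Cov}(W(u),W(v))=\Gamma(u,v)=S(u)S(v)\sigma^2(u\wedge v)$ and exploiting the independent increments of the time-changed Brownian motion $B\circ\sigma^2$ then produces the stated covariance function after a short calculation.

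I expect the only genuinely delicate point to be the bootstrap side: one must ensure that $S_n^*|_{[t_1,\tau]}$ lies in the domain $\widetilde D[t_1,t_2]$ of $\psi$ with conditional probability tending to one in probability --- this is where the conditional uniform consistency drawn from Theorem~\ref{thm:ebs_kme_wl} enters --- and then run the conditional functional delta-method with the correct modes of convergence, handling the attendant outer-probability and measurability bookkeeping and the passage between the Skorohod and supremum-norm topologies (harmless here since every limit has continuous paths). The covariance computation and the classical consistency inputs are routine.
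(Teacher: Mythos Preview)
Your proposal is correct and follows essentially the same route as the paper: apply the functional delta-method with the Hadamard-differentiable map $\psi$ of Lemma~\ref{lem:hadamard_ex}, using Lemma~\ref{lem:KME_WL}(b) and Theorem~\ref{thm:ebs_kme_wl} respectively, after checking that $\widehat S_n$ and $S_n^*$ land in the domain $\widetilde D[t_1,t_2]$ with (conditional) probability tending to one; for the bootstrap side both you and the paper cite the conditional delta-method of \cite{vaart96}, Section~3.9. The only noticeable difference is cosmetic: for the covariance the paper expands $\mathrm{cov}(U(r),U(s))$ directly via Fubini, inserts $\Gamma(u,v)=S(u)S(v)\sigma^2(u\wedge v)$, and splits $\int_r^\tau=\int_r^s+\int_s^\tau$ to see the cancellation on $[r,s]\times[s,\tau]$, whereas you first rewrite $U(s)=\int_s^\tau\frac{S(u)}{S(s)}\{(B\circ\sigma^2)(u)-(B\circ\sigma^2)(s)\}\,\d u$ and then exploit independent increments; both computations are routine and produce the same cross-term vanishing.
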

The previous lemma in combination with the continuous mapping theorem 
almost immediately gives rise to the construction of asymptotically valid confidence regions 
for the mean residual life-time function.
According to the functional delta-method we may first apply, 
e.g. an $\arcsin$- or $\log$-transformation to ensure that only positive values are included in the confidence regions;
cf. Section~IV.1.3 in \cite{abgk93}, p. 208ff.
For ease of presentation, only the linear regions are stated below.
\begin{thm}
\label{thm:EBS_KME_APP}
Let $0 \leq t_1 \leq t_2 < \tau$.
  Choose any $\alpha \in (0,1)$ and suppose that~\eqref{cond:weak_cens} holds.
  An asymptotic two-sided $(1-\alpha)$-confidence band for the mean residual life-time function $(\E[T_1 - t \ | \ T_1 > t])_{t \in [t_1,t_2]} $ is given by
  \begin{align*}
   \Big[ \int_t^\tau \frac{\widehat S_n(u)}{\widehat S_n(t)} \d u - \frac{q_{n_1,n_2}^{MRLT}}{\sqrt n}, 
    \int_t^\tau \frac{\widehat S_n(u)}{\widehat S_n(t)} \d u + \frac{q_{n_1,n_2}^{MRLT}}{\sqrt n} \Big]_{t \in [t_1,t_2]}
  \end{align*}
  where $q^{MRLT}_{n_1,n_2}$ is the $(1-\alpha)$-quantile of the conditional law given $(X_{1},\delta_{1}), \dots, (X_{n},\delta_{n})$ of 
  \begin{align*}
    \sqrt{n} \sup_{t \in [t_1,t_2]} \Big| \int_t^\tau \frac{S_n^*(u)}{S_n^*(t)} \d u 
    - \int_t^\tau \frac{\widehat S_n(u)}{\widehat S_n(t)} \d u  \ \Big|.
  \end{align*}
\end{thm}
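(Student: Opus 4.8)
The plan is to combine the conditional weak convergence of the bootstrapped mean-residual-lifetime process from Lemma~\ref{lem:EBS_KME_APP} with the unconditional weak convergence stated there, and then to pass the sup-functional and a quantile argument through, in the usual bootstrap-confidence-band way. First I would note that, by Lemma~\ref{lem:EBS_KME_APP}, both the centred estimator process $\sqrt n(\int_\cdot^\tau \widehat S_n(u)/\widehat S_n(\cdot)\,\d u - \int_\cdot^\tau S(u)/S(\cdot)\,\d u)$ and its bootstrap counterpart converge weakly in $D[t_1,t_2]$ (the latter conditionally on the data, in outer probability) to the same mean-zero Gaussian process $U$ with a.s.\ continuous sample paths. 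Since $U$ has continuous paths, weak convergence in the Skorohod and in the supremum topology coincide (as recalled after Lemma~\ref{lem:hadamard_ex}), so the continuous mapping theorem applies to the functional $f \mapsto \sup_{t\in[t_1,t_2]}|f(t)|$, which is continuous on $D[t_1,t_2]$ under the sup-norm. Consequently $\sqrt n \sup_{t}|\int_t^\tau \widehat S_n(u)/\widehat S_n(t)\,\d u - \int_t^\tau S(u)/S(t)\,\d u| \oDo \|U\|_\infty$ unconditionally, and $\sqrt n \sup_t |\int_t^\tau S_n^*(u)/S_n^*(t)\,\d u - \int_t^\tau \widehat S_n(u)/\widehat S_n(t)\,\d u| \oDo \|U\|_\infty$ conditionally on $X_1,X_2,\dots$ in (outer) probability, where $\|U\|_\infty = \sup_{t\in[t_1,t_2]}|U(t)|$.

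Next I would argue that $q_{n_1,n_2}^{MRLT}$, the conditional $(1-\alpha)$-quantile of the bootstrap statistic, converges in probability to the $(1-\alpha)$-quantile $q_{1-\alpha}$ of $\|U\|_\infty$. This is the standard consequence of conditional weak convergence of a real-valued statistic together with continuity of the limiting distribution function: the law of $\|U\|_\infty$ is continuous and strictly increasing in a neighbourhood of $q_{1-\alpha}$ because $U$ is a non-degenerate mean-zero Gaussian process with continuous paths, so its sup-norm has no atoms (indeed its distribution function is continuous everywhere on $(0,\infty)$, by e.g.\ Tsirelson's theorem or Gaussian anticoncentration). Here one must check that $U$ is genuinely non-degenerate on $[t_1,t_2]$, i.e.\ that its covariance function from Lemma~\ref{lem:EBS_KME_APP} does not vanish identically; this holds under \eqref{cond:weak_cens} since $\sigma^2$ is strictly increasing wherever $H_- > 0$ and $[t_1,t_2]\subset[0,\tau)$ lies in the interior of the support. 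Given this, a subsequence / Polya-type argument yields $q_{n_1,n_2}^{MRLT} \oPo q_{1-\alpha}$.

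Finally I would assemble the coverage statement. The event that the true function lies in the stated band is exactly $\{\sqrt n \sup_{t\in[t_1,t_2]}|\int_t^\tau \widehat S_n(u)/\widehat S_n(t)\,\d u - \int_t^\tau S(u)/S(t)\,\d u| \le q_{n_1,n_2}^{MRLT}\}$, and by a Slutsky-type argument (the unconditional convergence of the estimator statistic to $\|U\|_\infty$, combined with $q_{n_1,n_2}^{MRLT}\oPo q_{1-\alpha}$ and continuity of the distribution function of $\|U\|_\infty$ at $q_{1-\alpha}$) this probability converges to $P(\|U\|_\infty \le q_{1-\alpha}) = 1-\alpha$. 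The main obstacle I anticipate is the rigorous handling of the conditional-convergence bookkeeping: one has to be careful that the continuous mapping theorem and the quantile-consistency step go through for \emph{conditional} weak convergence ``in probability'' (working along subsequences so that the conditional convergence is almost sure, then applying the unconditional CMT and quantile arguments pathwise, and finally returning to convergence in probability), and that outer-probability/measurability issues arising from the sup over $[t_1,t_2]$ do not interfere — but all of this is by now routine and is, in effect, already subsumed by the structure of Lemma~\ref{lem:EBS_KME_APP}.
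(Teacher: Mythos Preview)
Your proposal is correct and follows essentially the same route as the paper: invoke Lemma~\ref{lem:EBS_KME_APP}, apply the continuous mapping theorem to the supremum functional (continuous on $C[t_1,t_2]$, where the limit $U$ lives), and then deduce asymptotic coverage from bootstrap consistency of the resulting real-valued statistic. The only difference is cosmetic: where you spell out the quantile-consistency and Slutsky step (continuity of the law of $\|U\|_\infty$, non-degeneracy of $U$, subsequence bookkeeping), the paper simply cites Lemma~1 of \cite{janssen03} for the passage from bootstrap consistency of a real statistic to validity of the corresponding confidence region.
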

\begin{rem}
\tn{(a)}
 Instead of using a transformation as indicated above Theorem~\ref{thm:EBS_KME_APP},
 one could also employ a studentization using $\widehat \Gamma_n$ and $\Gamma_n^*$.
 Plugging these and consistent estimators for the other unknown quantities into the asymptotic variance representation
 yields consistent variance estimators for the statistic of interest.
 This yields a Gaussian process with asymptotic variance 1 at all points of time for the mean residual life-time estimates. \\
\iffalse
 \tn{(b)} Although Theorem~\ref{thm:ebs_kme_wl} asymptotically mimicks the behaviour of the Kaplan-Meier estimator,
the construction of asymptotic $(1-\alpha)$ simultaneous confidence bands for the survival curve on the whole support $[0,\tau]$ is still problematic.
This is due to the nature of the Kaplan-Meier estimator: 
At all times before the first event it is constantly one 
and after the last observation it jumps to zero if this corresponds to an event.
Thus, a band for a survival curve is only then sensible if its values at both tails are chosen quite liberally. \\
\fi
\tn{(b)} In practice, the construction of confidence bands for the mean residual life-time function
 requires to choose $t_2$ depending on the data:
  else, too large choices of $t_2$ might result in $\widehat S_n(t_2) = 0$, in which case the above estimator would not be well-defined.
\end{rem}

\subsection*{Confidence Regions for the Lorenz Curve and the Gini Index}

Suppose $S$ has compact support $[0,\tau]$, i.e. let this again be the smallest interval satisfying $S(0)=1$ and $S(\tau)=0$. 
As estimators for the Lorenz curve and the Gini index we consider the plug-in estimates
$$\widehat L_n(p) = \frac1{\widehat\mu_n} \int_0^p (1- \widehat S_n(t))^{-1} \d t 
\quad \text{and} \quad \wh G_n = \frac{\int_0^1 (u - \widehat L_n(u)) \d u}{\int_0^1 u \d u},$$
where $\widehat \mu_n = \int_0^{\tau} s \d \widehat S_n(s)$.
The restricted and unscaled Lorenz curve estimator under independent right-censoring has been bootstrapped by \cite{horvath87}.
\cite{tse06} discussed the large sample properties of the above Lorenz curve estimator (even under left-truncation)
and also of the normalized estimated Gini index
\begin{align*}
 \sqrt{n} (\widehat G_n - G)  = \sqrt{n} \Big( \frac{\int_0^1 (u - \widehat L_n(u)) \d u}{\int_0^1 u \d u} - \frac{\int_0^1 (u - L(u)) \d u}{\int_0^1 u \d u} \Big) 
  = 2\sqrt{n} \int_0^1 (L(u) - \widehat L_n(u))\d u.
\end{align*}
Again equip all subsequent function spaces with the supremum norm.
Let $D_\uparrow[0,\tau] \subset D[0,\tau]$ be the set of all distribution functions on $[0,\tau]$ with no atom in $0$, and let $D_-[0,\tau]$ be the set of all c\`agl\`ad functions on $[0,\tau]$.
First, we consider the normalized estimated Lorenz curve, i.e.
the process $W_n: \Omega \rightarrow [0,1]$ given by
\begin{align*}
  W_n(p) = & \sqrt{n} \Big( \frac1{\widehat \mu_n} \int_0^p (1- \widehat S)^{-1}(s) \d s - \frac1\mu \int_0^p (1- S)^{-1}(s) \d s \Big) \\
    = & \sqrt{n} ( \widehat \mu_n^{-1} \cdot (\Phi \circ \Psi \circ (1 -\widehat S))(p) - \mu^{-1} \cdot ( \Phi \circ \Psi \circ (1 - S) )(p)).
\end{align*}
Here the functionals $\Phi$ and $\Psi$ are 
\begin{align*}
 & \Phi: D_-[0,1] \mapsto C[0,1], \quad h \mapsto \Big( p \mapsto \int_0^p h(s) \d s \Big), \quad  \text{and} \\
 & \Psi: D_\uparrow[0,\tau] \mapsto D_-[0,1] , \quad k \mapsto k^{-1} \quad \text{(the left-continuous generalized inverse)}.
\end{align*}
Suppose that $S$ is continuously differentiable on its support with strictly positive derivative $f$, bounded away from zero.
The Hadamard-differentiability of $\Psi$ at $(1-S)$ tangentially to $C[0,\tau]$ then holds according to Lemma~3.9.23 in \cite{vaart96}, p. 386.
Its derivative map is given by $\alpha \mapsto - \frac{\alpha}{f} \circ (1-S)^{-1}$.
The other functional $\Phi$ is obviously Hadamard-differentiable at $S^{-1} \in C[0,1]$ tangentially to $C[0,1]$ 
since $\Phi$ itself is linear and the domain of integration is bounded.
Next, 
$$\sqrt{n}\Big( \frac{1}{\widehat \mu} - \frac{1}{\mu}\Big) = \sqrt{n} ( \Upsilon( \widehat g_n(0)) - \Upsilon( g(0))) $$ 
where $\Upsilon: (0,\infty) \rightarrow (0,\infty), r \mapsto \frac1r$, \
$g(0) = \E[ T - a \ | \ T > 0 ] = \E[T]$ is the mean-residual life-time function at $0$ and $\widehat g_n(0)$ its estimated counterpart. 
Clearly, $\Upsilon$ is (Hadamard-)differentiable 
and the required Hadamard-differentiability of $(1-S) \mapsto g(0)$ follows immediately from Lemma~\ref{lem:hadamard_ex}.
Finally, the multiplication functional is also Hadamard-differentiable.
All in all, we conclude that $W_n = \sqrt{n}( \Xi(\widehat S_n) - \Xi(S))$
for a functional $\Xi : D[0,\tau] \rightarrow C[0,1]$ which is Hadamard-differentiable at $S$
tangentially to $C[0,\tau]$.
Theorem~\ref{thm:ebs_kme_wl} in combination with the functional $\delta$-method (for the bootstrap)
immediately implies that $W_n$ and $W_n^*$ both converge in (conditional) distribution to the same continuous Gaussian process (in outer probability given $\b X$).
Time-simultaneous inference procedures for the Lorenz curve, such as tests for equality and confidence bands are constructed straightforwardly.

Finally, the normalized estimated Gini index allows the representation
\begin{align*}
 \sqrt{n}(\widehat G_n - G) = 2 \sqrt{n}( \{ \Phi \circ \Xi \}(1) \circ \widehat S_n - \{ \Phi \circ \Xi \}(1)  \circ S)
\end{align*}
of which $\{ \Phi \circ \Xi \}(1)$ is again Hadamard-differentiable at $S$
tangentially to $C[0,\tau]$.
Hence, confidence intervals for $G$ with bootstrap-based quantiles are constructed in the same way as before.

%\section{Simulations}
%\label{sec:simus}

\section{Discussion}
\label{sec:EBS_KME_DISC}

In this article we established consistency of the bootstrap for Kaplan-Meier estimators 
on the whole support of the estimated survival function.
By means of the functional delta-method this conditional weak convergence is transferred to Hadamard-differentiable functionals such as the mean-residual lifetime, the Lorenz curve or the Gini index.
Further applications include the expected length of stay in the transient state (e.g. \citealt{grand15})
or the probability of concordance (e.g. \citealt{pocock2012win}, \citealt{dobler16b}).

This bootstrap consistency on the whole support may also be extended to more general inhomogeneous Markovian multistate models.
Based on the martingale representation of Aalen-Johansen estimators for transition probability matrices   (e.g. \citealp{abgk93}, p. 289),
one could try to generalize the results of \cite{gill83} to this setting.
Here the notion of the `largest event times' requires special attention
as these may differ for different types of transitions.
A reasonable first step towards such a generalization would be an analysis in competing risks set-ups
where the support of each cumulative incidence function provides a natural domain to investigate weak convergences on.
Once weak convergence of the estimators on the whole support is verified,
martingale arguments similar to those of \cite{akritas86} and \cite{gill83} 
may be employed in order to obtain such (now conditional) weak convergences for the resampled Aalen-Johansen estimator using a variant of Efron's bootstrap.
In more general Markovian multi-state models we could independently draw with replacement from the sample that contains all individual {\it{trajectories}} rather than single observed transitions in order to not corrupt the dependencies within each individual;
see for example \cite{tattar12} for a similar suggestion.
Applications of this theory could include inference on more refined variants of the probability of concordance or the expected length of stay.
Considering a progressive disease in a two-sample situation, for instance, 
we would like to compare the probability that an individual of group one remains longer in a less severe disease state than an individual of group two.
Accurate inference procedures for the mean residual life-time in a state of disability given any state at present time 
offers another kind of application.

\section*{Acknowledgements}

The author would like to thank Markus Pauly (University of Ulm) for helpful discussions.

\vspace{.5cm}
\noindent {\Large\textbf{Appendix}}

\appendix

Some of the following proofs (Appendix~\ref{sec:EBS_KME_PROOFS}) rely on the ideas of \cite{gill83}.
In order to also apply (variants of) his lemmata in our bootstrap context,
Appendix~\ref{sec:gills_lemmata} below contains all required results.
`Tightness' in the support's right boundary $\tau$ for the bootstrapped Kaplan-Meier estimator
is essentially shown via a bootstrap version of the approximation theorem for truncated estimators as in Theorem~3.2 in \cite{billingsley99}; cf. Appendix~\ref{sec:trunc_tech}.
Define by $Y(u) = n \wh H_{n-}(u)$ the process counting the number of individuals at risk of dying, and by $Y^*(u)$ its bootstrap version.

\section{Proofs}
\label{sec:EBS_KME_PROOFS}
\begin{proof}[Proof of Lemma~\ref{lem:VAR_KME_WL}]
Proof of (a):
Let $t <  \tau$ and suppose~\eqref{cond:weak_cens} holds.
By the continuous mapping theorem and the boundedness away from zero of $\frac{1}{G}$ on $[0,t]$,
it clearly follows that $ -\int_0^{t} \frac{\d \widehat S_n}{\widehat G_{n-}} \oPo - \int_0^{t} \frac{\d S}{G_-}$ as $n \rightarrow \infty$.
Letting $t \uparrow \tau$, the right-hand side converges towards \ $- \int_0^{\tau} \frac{\d S}{G_-} < \infty$.
It remains to apply Theorem~3.2 of \cite{billingsley99} in order to verify the assertion for $t=0$ and hence for all $t \leq \tau$ by the continuous mapping theorem.
Thus, we show that for all $\varepsilon > 0$, 
$$ \lim_{t \uparrow \tau} \limsup_{n \rightarrow \infty} P \Big( - \int_t^{\tau} \frac{\d \widehat S_n}{\widehat G_{n-}} > \varepsilon \Big) = 0 .$$
Let $\widehat T_n$ again be the largest observation among $X_1, \dots, X_n$ and define, for any 
$\beta > 0$, $$B_\beta := \{ \widehat S_n(s) \leq \beta^{-1} S(t) \text{ and } \widehat H_n(s-) \geq \beta H(s-) \text{ for all } s \in [0,\widehat T_n] \}.$$
By Lemmata~\ref{lem:gill83_2.6} and~\ref{lem:gill83_2.7}, the probability
$ p_\beta := 1 - P(B_\beta) \leq \beta + \frac{e}{\beta} \exp(-1 / \beta)$
is arbitrary small for sufficiently small $\beta > 0$.
Hence, by Theorem~1.1 of \cite{stute93} (applied for the concluding convergence),
\begin{align*}
 & P \Big( - \int_t^{\tau} \frac{\d \widehat S_n}{\widehat G_{n-}} > \varepsilon \Big)
 = P \Big( - \int_t^{\tau} \frac{\widehat S_{n-} \d \widehat S_n}{\widehat H_{n-}} > \varepsilon \Big) \\
 & \leq P \Big( - \beta^{-2} \int_t^{\tau} \frac{S_- \d \widehat S_n}{H_-} > \varepsilon \Big) + p_\beta
 \\ & = P \Big( - \beta^{-2} \int_t^{\tau} \frac{\d \widehat S_n}{G_-} > \varepsilon \Big) + p_\beta
 \rightarrow P \Big( - \beta^{-2} \int_t^{\tau} \frac{\d S}{G_-} > \varepsilon \Big) + p_\beta.
\end{align*}
For large $t < \tau$ and by the continuity of $S$, the far right-hand side of the previous display equals $p_\beta$.

Proof of (b): First note that the uniform convergences in probability in Theorems~IV.3.1 and~IV.3.2 of \cite{abgk93}, p. 261ff., yield, for any $\varepsilon > 0$,
\begin{align*}
 \sup_{(u,v) \in [0,\tau-\varepsilon]^2} | \widehat \Gamma_n(u,v) - \Gamma(u,v) | \oPo 0 \quad \text{as } n \rightarrow \infty.
\end{align*}
Further, the dominated convergence theorem and $ S_- \d A = - \d S$ show that
\begin{align*}
 \Gamma(u,v) = - \int_0^\tau \b 1\{ w \leq u \wedge v \} \frac{S(u)S(v)}{S(w-) S(w-)} \frac{\d S(w)}{G(w-)} \rightarrow - \int_0^\tau 0 \frac{\d S}{G_-} = 0
\end{align*}
as $u,v \rightarrow \tau$.
Hence, it remains to verify the remaining condition~(3.8) of Theorem~3.2 in \cite{billingsley99} in order to conclude this proof.
That is, for each positive $\delta$ we show
\begin{align*}
 \lim_{u,v \rightarrow \tau} \limsup_{n \rightarrow \infty} P \Big( \sup_{(u,v) \in [0,\tau]^2} | \widehat \Gamma_n(u,v) - \widehat \Gamma_n(\tau, \tau) | \geq \delta \Big) = 0.
\end{align*}
To this end, rewrite $\widehat \Gamma_n(u,v) - \widehat \Gamma_n(\tau, \tau)$ as
\begin{align*}
 \int_{u \wedge v}^\tau \frac{\widehat S_n(\tau)  \widehat S_n(\tau) }{\widehat S_n(w- ) \widehat S_n(w-)  } \frac{\d \widehat S_n(w)}{\widehat G_{n}(w-)}
 + \int_0^{u \wedge v} \frac{\d \widehat S_n}{\widehat S^2_{n-} \widehat G_{n-}} ( \widehat S_n^2(\tau) - \widehat S_n(u) \widehat S_n(v) ).
\end{align*}
The left-hand integral is bounded in absolute value by $-\int_{u \wedge v}^\tau \frac{\d \widehat S_n}{\widehat G_{n-}}$
which goes to $-\int_{u \wedge v}^\tau \frac{\d S}{G_{-}}$ in probability as $n \rightarrow \infty$ by (a).
For large $u,v$ this is arbitrarily small. 

The remaining integral is bounded in absolute value by 
\begin{align*}
 -\int_0^{u \wedge v} \frac{\widehat S_n(u) \widehat S_n(v)}{\widehat S^2_{n-}} \frac{\d \widehat S_n}{\widehat G_{n-}} 
 = - n^2 \int_0^{u \wedge v} \frac{\widehat S_n(u) \widehat S_n(v)}{Y^2} \widehat G_{n-} \d \widehat S_n.
\end{align*}
By Lemmata~\ref{lem:gill83_2.6} and~\ref{lem:gill83_2.7}
this integral is bounded from above by
\begin{align*}
 - \int_0^{u \wedge v} \frac{S(\widehat T_n \wedge u) S(\widehat T_n \wedge v)}{H^2_-} G_{-} \d \widehat S_n
 =  - \int_0^{u \wedge v} \frac{S(\widehat T_n \wedge u) S(\widehat T_n \wedge v)}{S^2_- G_-} \d \widehat S_n
\end{align*}
on a set with arbitrarily high probability.
For sufficiently large $n$ we also have $\widehat T_n > u \wedge v$ with arbitrarily high probability.
Next, Theorem~1.1 in \cite{stute93} yields
\begin{align*}
 - \int_0^{u \wedge v} \frac{S(u) S(v)}{S^2_- G_-} \d \widehat S_n \oPo - \int_0^{u \wedge v} \frac{S(u) S(v)}{S^2_- G_-} \d S \quad \text{as } n \rightarrow \infty.
\end{align*}
As above the dominated convergence theorem shows the negligibility of this integral as $u,v\rightarrow \tau$.
\end{proof}

\begin{proof}[Proof of Theorem~\ref{thm:ebs_kme_wl}]
  For the proof of weak convergence of the bootstrapped Kaplan-Meier estimator on each Skorohod space $D[0,t]$, $t < \tau$, 
  see e.g. \cite{akritas86}, \cite{lo86} or \cite{horvath87}.
  By defining these processes as constant functions after $t$, 
  the convergences equivalently hold on $D[0,\tau]$.
  This takes care of Condition (a) in Lemma~\ref{lem:abschneiden}, 
  while (c) is obviously fulfilled by the continuity of the limit Gaussian process.

To close the indicated gap for the bootstrapped Kaplan-Meier process on the whole support $[0,\tau]$,
it remains to analyze Condition (b).
This is first verified for the truncated process by following the strategy of \cite{gill83} 
while applying the martingale theory of \cite{akritas86} for the bootstrapped counting processes.
Thus, the truncation technique of Lemma~\ref{lem:abschneiden} shows the convergence in distribution of the truncated process.
Finally, the negligibility of the remainder term is shown similarly as in \cite{ying89}.

We will make use of the fact that our martingales, stopped at arbitrary stopping times, retain the martingale property; cf. \cite{abgk93}, p. 70, for sufficient conditions on this matter.
Similarly to the largest event or censoring time $\widehat T_n$,
introduce the largest bootstrap time $T^*_n = \max_{i=1, \dots, n} X_i^*$,
being an integrable stopping time with respect to the filtration of \cite{akritas86}
who used Theorem~3.1.1 of \cite{gill80}:
Hence, we choose the filtration given by
\begin{align*}
 \mathcal{F}_t := \{ X_i, \delta_i, \delta_i^* \bs 1\{X_i^* \leq t\}, X_i^*\bs 1\{X_i^* \leq t\}: i = 1, \dots, n  \}, \quad 0 \leq t \leq \tau;
\end{align*}
see also \cite{gill80}, p. 26, for a similar minimal filtration.
Note that we did not include the indicators $\bs 1\{X_i^* \leq t\}$ into the filtration since
their values are already determined by all the $X_i^*\bs 1\{X_i^* \leq t\}$:
According to our assumptions, $X_i^* > 0$ a.s. for all $i=1,\dots,n$.

We would first like to verify condition (b) in Lemma~\ref{lem:abschneiden}
for the stopped bootstrap Kaplan-Meier process.
That is, for each $\varepsilon > 0$ and an arbitrary subsequence $(n') \subset (n)$ there is another subsequence $(n'') \subset (n')$ such that 
\begin{align}
\nonumber
 & \lim_{t \uparrow \tau} \limsup_{n'' \rightarrow \infty} 
 P( \sup_{t \leq s < T_n^*} \sqrt{n''} | (S_n^* - \widehat S_n)(s) - (S_{n}^* - \widehat S_n)(t) | > \varepsilon \ | \ \b X ) \\
 \nonumber
 & \leq \lim_{t \uparrow \tau} \limsup_{n'' \rightarrow \infty} 
 P( \sup_{t \leq s < \widehat T_n} \sqrt{n''} | (S_{n}^* - \widehat S_n)(s \wedge T_n^*) - (S_{n}^* - \widehat S_n)(t \wedge T^*_n) | > \varepsilon \ | \ \b X ) \\
 & \quad = 0 \quad  \text{a.s.} \label{eq:clipping}
 \end{align}
for all $\varepsilon > 0$.
Here $\sigma(\b X) = \mc F_0$ summarizes the collected data.
Due to the boundedness away from zero, i.e. $\inf_{t \leq s < \widehat T_n} \widehat S_n(s) > 0$,
we may rewrite the bootstrap process $\sqrt{n} ( S_{n}^* - \widehat S_n ) (s) = \sqrt{n} \Big( \frac{S_{n}^*(s)}{\widehat S_n(s)} - 1 \Big) \widehat S_n(s)$
for each $s \in [t,\widehat T_n)$
of which the bracket term is a square integrable martingale; see \cite{akritas86} again.
Hence, the term $\sqrt{n}  (S_n^* - \widehat S_n)(s)$ in~\eqref{eq:clipping} equals 
\begin{align}
  \label{eq:clipping sup}
 M_n^*(s)  \widehat S_n(s \wedge T^*_n) :=
\sqrt{n} \Big( \frac{S_{n}^*(s \wedge T^*_n)}{\widehat S_n(s \wedge T^*_n)} - 1 \Big) \widehat S_n(s \wedge T^*_n),
\end{align}
whereof $(M^*_n(s))_{s \in [0,\widehat T_n)}$ is again a square integrable martingale.
Indeed, its predictable variation process evaluated at the stopping time $s = T_n^*$ is finite
(having the sufficient condition of \cite{abgk93}, p. 70, for a stopped martingale to be a square integrable martingale in mind):
The predictable variation is given by
\begin{align*}
 s \mapsto \langle M_n^* \rangle (s) = \int_0^{s \wedge T_n^*} \Big( \frac{S_{n-}^*}{\widehat S_n} \Big)^2 
  \frac{(1- \Delta \widehat A_n) \d \widehat A_n}{H^*_{n-}},
\end{align*}
where $H^*_n$ is the empirical survival function of $X_1^*, \dots, X_n^*$ and $\Delta f$ denotes the increment process $s \mapsto f(s+) - f(s-)$ of a monotone function $f$.
The supremum in~\eqref{eq:clipping} is bounded by
\begin{align*}
 \sup_{t \leq s < \widehat T_n} | M^*_n(s) - M^*_n(t) | \widehat S_n(s \wedge T^*_n) 
  + \sup_{t \leq s < \widehat T_n} | M^*_n(t) | | \widehat S_n(s \wedge T^*_n) - \widehat S_n(t \wedge T^*_n) |
\end{align*}
of which the right-hand term is not greater than $ | M_n^*(t) | \widehat S_n(t \wedge T^*_n) $.
By the convergence in distribution of the bootstrapped Kaplan-Meier estimator on each $D[0,\widetilde \tau]$, $\widetilde \tau < \tau$,
we have convergence in conditional distribution of $ M^*_n(t) \widehat S_n(t \wedge T^*_n) $ given $\b X$ 
towards $N(0, S^2(t) \Gamma(t,t))$ in probability. %, where again $\Gamma(t,t) = S^2(t)\int_0^t \frac{\d A}{S G_-}$.
Hence, $$ \lim_{n'' \rightarrow \infty} P ( | M_n^*(t) \widehat S_n(t \wedge T^*_n) | > \varepsilon/2 \ | \ \b X) \rightarrow 1 - N(0, \Gamma(t,t))(-\varepsilon/2, \varepsilon/2) $$ 
almost surely along subsequences $(n'')$ of arbitrary subsequences $(n') \subset (n)$.
Since the variance of the normal distribution in the previous display goes to zero as $t \uparrow \tau$, cf. (2.4) in \cite{gill83},
the above probability vanishes as $t \uparrow \tau$.

By Lemma~\ref{lem:gill83_2.9}, the remainder $\sup_{t \leq s < \widehat T_n} | M^*_n(s) - M^*_n(t) | \widehat S_n(s \wedge T^*_n) $ is not greater than
\begin{align}
 2 \sup_{t \leq s < \widehat T_n} \Big| \int_t^s \widehat S_n(u) \d M_n^*(u) \Big| .
\end{align}
%Note that, in fact, the continuity assumption in his Lemma~2.9 is not necessary.
%and that the case $\widehat S(\widetilde \tau \wedge T^*) = 0$ is trivially also covered by Gill's lemma.
Since, given $\b X$, $\widehat S_n$ is a bounded and predictable process,
this integral is a square integrable martingale on $[t,\widehat T_n)$.
We proceed as in \cite{gill83} by applying Leng\-lart's inequality, cf. Section~II.5.2 in \cite{abgk93}:
For each $\eta > 0$ we have
\begin{align}
\begin{split}
\label{eq:lenglart}
  & P \Big( \sup_{t \leq s < T^*_n \wedge \tau} \Big| \int_t^s \widehat S_n \d M^*_n \Big| > \varepsilon \ \Big| \ \b X \Big) \\
 & \quad \leq \frac{\eta}{\varepsilon^2} + P \Big( \Big| \int_t^{\tau \wedge T^*_n} S_{n-}^{*2} 
      \frac{(1- \Delta \widehat A_n) \d \widehat A_n}{H^*_{n-}} \Big| 
	> \eta \ \Big| \ \b X \Big).
\end{split}
\end{align}
%Since Gill's Lemmata~2.6 and~2.7 are even applicable {\color{red}ausfuehren!} for our discontinuous Kaplan-Meier functions 
%$\widehat S_n$ and $\widehat H_n$ (being the empirical survival function of $X_1, \dots, X_n$),
We intersect the event on the right-hand side of~\eqref{eq:lenglart} with 
$B_{H,n,\beta}^* := \{H^*_n(s-) \geq \beta \widehat H_n(s-)$ for all $s \in [t,T^*_n]\}$ 
and also with $B_{S,n,\beta}^* :=  \{S^*_n(s) \leq \beta^{-1} \widehat S_n(s)$ for all $s \in [t,T^*_n]\}$.
According to Lemmata~\ref{lem:gill83_2.6} and~\ref{lem:gill83_2.7}, 
the conditional probabilities of these events are at least 
$ 1- \exp(1-1/\beta)/\beta$ and $1-\beta$, respectively, for any $\beta \in (0,1)$.
Thus, \eqref{eq:lenglart} is less than or equal to
\begin{align}
\label{eq:beta} 
\frac{\eta}{\varepsilon^2} + \beta + \frac{\exp(1-1/\beta)}{\beta} + 
    \b 1 \Big\{ \beta^{-3} \Big| \int_t^{\tau \wedge \widehat T_n} \widehat S_{n-}^{2} 
      \frac{(1- \Delta \widehat A_n) \d \widehat A_n}{\widehat H_{n-}} \Big| 
	> \eta \Big\}.
\end{align}
In order to show the almost sure negligibility of the indicator function as $n \rightarrow \infty$ and then $t \uparrow \tau$,
we analyze the corresponding convergence of the integral.
Since $- \d \widehat S_n = \widehat S_{n-} \d \widehat A_n$, the integral is less than or equal to 
$$-\int_t^{\tau} \frac{\widehat S_{n-} \d \widehat S_{n}}{\widehat H_{n-}} = -\int_t^{\tau} \frac{\d \widehat S_{n-}}{\widehat G_{n-}}.$$
Lemma~\ref{lem:VAR_KME_WL} implies that for each subsequence $(n') \subset (n)$ there is another subsequence $(n'') \subset (n')$ 
such that $-\int_t^{\tau} \frac{\d \widehat S_{n-}}{\widehat G_{n-}} \rightarrow - \int_t^{\tau} \frac{\d S}{G_-}$ a.s. for all $t \in [0, \tau] \cap \Q$ along $(n'')$.
Due to $P(Z_1 \in \Q) = 0$, the same convergence holds for all $ t \leq  \tau$.
Letting now $t \uparrow \tau$ shows that the indicator function in \eqref{eq:beta} vanishes almost surely in limit superior along $(n'')$ if finally $t \uparrow\tau$.
The remaining terms are arbitrarily small for sufficiently small $\eta, \beta > 0$.
Hence, all conditions of Lemma~\ref{lem:abschneiden} are met and the assertion follows for the stopped process
$$ ( \b 1 \{ s < T^*_n\} \sqrt{n}(S_n^*(s) - \widehat S_n(s)) 
  + \b 1 \{ s \geq  T^*_n\} \sqrt{n}(S_n^*(T_n^*-) - \widehat S_n(T_n^*-)))_{s \in [0,\tau]}.$$

Finally, we show the asymptotic negligibility of
\begin{align*}
 \sup_{T^*_n \leq s \leq \tau} \sqrt{n} | S^*_{n}(s) - \widehat S_n(s) | 
 & \leq \sup_{T^*_n \leq s \leq \tau} \sqrt{n} ( S^*_{n}(s) +  \widehat S_n(s)) \\
 & = \sqrt{n} S^*_{n}(T^*_n) + \sqrt{n} \widehat S_n(T^*_n);
\end{align*}
cf. \cite{ying89} for similar considerations.
Again by Lemma~\ref{lem:gill83_2.6},
we have for any $\varepsilon > 0, \beta \in (0,1)$ that
\begin{align*}
 & P ( \sqrt{n} S^*_{n}(T^*_n) + \sqrt{n} \widehat S_n(T^*_n) > \varepsilon \ | \ \b X ) \\
 & \leq P ( \sqrt{n} S^*_{n}(T^*_n) > \varepsilon / 2 \ | \ \b X ) + P(\sqrt{n} \widehat S_n(T^*_n) > \varepsilon / 2 \ | \ \b X ) \\
 & \leq P ( \sqrt{n} \widehat S_n(T^*_n) > \beta \varepsilon / 2 \ | \ \b X ) + P(\sqrt{n} \widehat S_n(T^*_n) > \varepsilon / 2 \ | \ \b X ) + \beta.
\end{align*}
Define the generalized inverse $\widehat S_n^{-1}(u) := \inf\{ s \leq \tau: \widehat S_n(s) \geq u\}$.
The independence of the bootstrap drawings as well as arguments of quantile transformations yield
\begin{align*}
 P(\sqrt{n} \widehat S_n(T^*_n) > \varepsilon \ | \ \b X )
 & = P(X_1^* < \widehat S_n^{-1}( \varepsilon / \sqrt{n}) \ | \ \b X )^n \\
 & = \Big[ 1 - \frac1n | \{ i : X_i \geq \widehat S_n^{-1}(\varepsilon / \sqrt{n} ) \} | \Big]^{n}.
\end{align*}
The cardinality in the display goes to infinity in probability, and hence almost surely along subsequences.
Indeed, for any constant $C >0$,
\begin{align*}
 & P ( | \{ i : X_i \geq \widehat S_n^{-1}(\varepsilon / \sqrt{n} ) \} | \geq C ) \\
 & = P ( | \{ i : \widehat S_n(X_i) \geq \varepsilon / \sqrt{n} \} | \geq C ) \\
 & \geq P ( | \{ i : \widehat H_n(X_i) \geq \varepsilon / \sqrt{n} \} | \geq C ) \\
 & = P \Big( \Big| \Big\{i : \frac{i-1}{n} \geq \frac{\varepsilon}{\sqrt{n}} \Big\} \Big| \geq C \Big) \\
 & = \b 1 \Big\{ \Big| \Big\{i : \frac{i-1}{n} \geq \frac{\varepsilon}{\sqrt{n}} \Big\} \Big| \geq C \Big\}
 = \b 1 \{ | \{ \lceil \varepsilon \sqrt{n} \rceil + 1, \dots, n \} |  \geq C\}.
\end{align*}
Clearly, this indicator function goes to 1 as $n \rightarrow \infty$. 
\end{proof}

\begin{proof}[Proof of Lemma~\ref{lem:ebs_kme_var_wl}]
 For the most part, we follow the lines of the above proof of Lemma~\ref{lem:VAR_KME_WL}
 by verifying condition~(3.8) of Theorem~3.2 in \cite{billingsley99}.
 To point out the major difference to the previous proof,
 we consider
 \begin{align*}
  - \int_{u \wedge v}^\tau \frac{\d S^*_{n}}{G^*_{n-}} 
  = \int_{u \wedge v}^\tau \frac{S^*_{n-} \d A_n^*}{G^*_{n-}}
  = \int_{u \wedge v}^\tau \frac{S^*_{n-}}{G^*_{n-}} J^* \d ( A_n^* - \widehat A_n) 
    + \int_{u \wedge v}^\tau \frac{S^*_{n-}}{G^*_{n-}} J^* \d \widehat A_n,
 \end{align*}
 where $J^*(u) = \b 1\{ Y^*(u) > 0 \}$.
  The arguments of \cite{akritas86} show that 
  $\int_{u \wedge v}^\cdot \frac{S^*_{n-}}{G^*_{n-}} J^* \d ( A_n^* - \widehat A_n) $
  is a square-integrable martingale with predictable variation process given by
  \begin{align*}
   t \longmapsto \int_{u \wedge v}^t \frac{S^{*2}_{n-}}{G^{*2}_{n-}} \frac{J^*}{Y^*} (1 - \Delta \widehat A_n) \d \widehat A_n.
  \end{align*}
  After writing $S_n^* G^*_n = H_n^*$, a two-fold application of Lemmata~\ref{lem:gill83_2.6} and~\ref{lem:gill83_2.7} (at first to the bootstrap quantities $S_n^*$ and $H_n^*$, then to the Kaplan-Meier estimators $\widehat S_n$ and $\widehat H_n$)
  show that the predictable variation in the previous display is bounded from above by
  \begin{align*}
    - \beta^{-13} \frac1n \int_{u \wedge v}^t \frac{S^{3}_-}{H^{3}_{-}} \d \widehat S_n
    = - \beta^{-13} \frac1n \int_{u \wedge v}^t \frac{\d \widehat S_n}{G^{3}_{-}}
  \end{align*}
  on a set with arbitrarily large probability depending on $\beta \in (0,1)$.
  Here we also used that $\widehat S_{n-} \d \widehat A_n = \d \widehat S_n$.
  Due to \eqref{cond:weaker_cens}, Theorem~1.1 of \cite{stute93} yields
  $$-\int_{u \wedge v}^t \frac{\d \widehat S_n}{G^{3}_{-}} \oPo - \int_{u \wedge v}^t \frac{\d S}{G^{3}_{-}} < \infty $$
  and hence the asymptotic negligibility of the predictable variation process in probability. 
  By Rebolledo's theorem (Theorem II.5.1 in \citealp{abgk93}, p. 83), $\int_{u \wedge v}^\tau \frac{S^*_{n-}}{G^*_{n-}} J^* \d ( A_n^* - \widehat A_n) $ hence goes to zero in conditional probability.
  The remaining integral $ \int_{u \wedge v}^\tau \frac{S^*_{n-}}{G^*_{n-}} J^* \d \widehat A_n$ 
  is treated similarly with Lemmata~\ref{lem:gill83_2.6} and~\ref{lem:gill83_2.7}
  and Theorem~1.1 of \cite{stute93}
  yielding a bound in terms of $\int_{u \wedge v}^\tau \frac{\d S}{G_-} $.
  This is arbitrarily small for sufficiently large $u,v < \tau$.
\end{proof}

\begin{proof}[Proof of Lemma~\ref{lem:hadamard_ex}]
 Proof of (b): Throughout, the functional spaces $D[t_1,\tau]$ and $\widetilde D[t_1,t_2]$ are equipped with the supremum norm.
  For some sequences $t_n \downarrow 0$ and $h_n \rightarrow h$ in $D[t_1,\tau]$ such that $\theta + t_n h_n \in \widetilde D[t_1,t_2]$, 
  consider the supremum distance
  \begin{align}
  \label{eq:sup_dist_MRLT}
   \sup_{s \in [t_1,t_2]} \Big| \frac{1}{t_n} [ \psi(\theta + t_n h_n)(s) - \psi(\theta)(s) ] - (\d \psi(\theta) \cdot h)(s)  \Big|.
  \end{align}
  The proof is concluded if~\eqref{eq:sup_dist_MRLT} goes to zero.
  For an easier access the expression in the previous display is first analyzed for each fixed $s \in [t_1,t_2]$:
  \begin{align}
  \label{eq:hadamard_MRLT}
   \nonumber & \frac{1}{t_n} [ \psi(\theta + t_n h_n)(s) - \psi(\theta)(s) ] - (\d \psi(\theta) \cdot h)(s)  \\ 
   \nonumber & = \frac{1}{t_n} \frac{1}{\theta(s) + t_n h_n(s)} \frac{1}{\theta(s)} \\
   \nonumber & \quad \times \Big[\theta(s) \int_s^\tau (\theta(u) + t_n h_n(u) ) \d u
    - (\theta(s) + t_n h_n(s) ) \int_s^\tau \theta(u) \d u\Big] \\
   \nonumber & \quad  - \frac{1}{\theta(s)} \int_s^\tau h(u) \d u + h(s) \int_s^\tau \frac{\theta(u)}{\theta^2(s)} \d u \\
   \nonumber & = \frac{1}{\theta(s) + t_n h_n(s)} \int_s^\tau h_n(u)  \d u 
    - \frac{h_n(s)}{\theta(s) + t_n h_n(s)} \frac{1}{\theta(s)} \int_s^\tau \theta(u) \d u \\
   \nonumber & \quad  - \frac{1}{\theta(s)} \int_s^\tau h_n(u) \d u + h_n(s) \int_s^\tau \frac{\theta(u)}{\theta^2(s)} \d u \\
   \nonumber & \quad  - \frac{1}{\theta(s)} \int_s^\tau (h(u) - h_n(u)) \d u - (h(s) - h_n(s)) \int_s^\tau \frac{\theta(u)}{\theta^2(s)} \d u   \\
   \nonumber & = - \int_s^\tau h_n(u)  \d u \frac{t_n h_n(s)}{[\theta(s) + t_n h_n(s)] \theta(s)} 
    + h_n(s) \int_s^\tau \frac{\theta(u)}{\theta^2(s)} \d u \frac{t_n h_n(s)}{\theta(s) + t_n h_n(s)} \\
    & \quad  - \frac{1}{\theta(s)} \int_s^\tau (h(u) - h_n(u)) \d u - (h(s) - h_n(s)) \int_s^\tau \frac{\theta(u)}{\theta^2(s)} \d u.
  \end{align}
  For large $n$, each denominator is bounded away from zero:
  To  see this, denote $\varepsilon := \inf_{s \in [t_1,t_2]}|\theta(s)|$ and 
  $C  := \sup_{s \in [t_1,t_2]} |h(u)|$.
  Thus,   $$ \sup_{s  \in [t_1,t_2]} | h_n(s) | \leq \sup_{s  \in [t_1,t_2]} | h_n(s) - h(u) | + \sup_{s  \in [t_1,t_2]} | h(s) | \leq \varepsilon + C$$ for each $n$ large enough.
  It follows that, for each such $n$ additionally satisfying $t_n \leq \varepsilon(2\varepsilon + 2C)^{-1}$,
  the denominators are bounded away from zero, in particular, $\inf_{s \in [t_1,t_2]}|\theta(s) + t_n h_n(s)| \geq \varepsilon / 2$.
  Thus, taking the suprema over $s \in [t_1,t_2]$, 
  the first two terms in~\eqref{eq:hadamard_MRLT} become arbitrarily small by letting $t_n$ be sufficiently small.
  The remaining two terms converge to zero since $\sup_{s  \in [t_1,t_2]} | h_n(s) - h(s) | \rightarrow 0$
  and $\sup_{u \in [t_1,\tau]} | \theta(u) | < \infty$.
  Note here that $$\int_{t_1}^\tau | h(u) - h_n(u) | \d u \leq \sup_{u \in [t_1,\tau]} | h(u) - h_n(u) | (\tau - t_1) \rightarrow 0 $$
  due to $\tau < \infty$.
\end{proof}

\begin{proof}[Proof of Lemma~\ref{lem:EBS_KME_APP}]
 The convergences are immediate consequences of the functional delta-method, Theorem~\ref{thm:ebs_kme_wl}
 and the bootstrap version of the delta-method; cf. Section~3.9 in \cite{vaart96}.
 Simply note that all considered survival functions are elements of $D_{< \infty} \cap \widetilde D[t_1,t_2]$ 
 (on increasing sets with probability tending to one) and
 that the survival function of the life-times is assumed continuous and bounded away from zero on compact subsets of $[0,\tau)$.
 Further, there is a version of the limit Gaussian processes with almost surely continuous sample paths.
 
 For the representation of the variance of the limit distribution in part (a) we refer to \cite{vaart96}, p. 383 and 397.
 The asymptotic covariance structure in part (b) is easily calculated using Fubini's theorem 
 -- for its applicability note that the variances $\Gamma(r,r)$ of the limit process $W$ of the Kaplan-Meier estimator exist at all points of time $r \in [0,\tau]$.
 Thus, since $W$ is a zero-mean process, we have for any $0 \leq r \leq s < \tau$,
 \begin{align*}
  & cov \Big( \int_r^\tau \frac{W(u)}{S(r)} \d u - \int_r^\tau \frac{W(r) S(u)}{S^2(r)} \d u, 
    \int_s^\tau \frac{W(v)}{S(s)} \d v - \int_s^\tau \frac{W(s) S(v)}{S^2(s)} \d v \Big)  \\
  &  = \int_r^\tau \int_s^\tau \Big[ \Gamma(u,v) - \frac{S(u)}{S(r)} \Gamma(r,v) 
    - \frac{S(v)}{S(s)} \Gamma(s,u) + \frac{S(u) S(v)}{S(r)S(s)} \Gamma(r,s) \Big] \frac{ \d u \d v}{S(r)S(s)} .
 \end{align*}
 Inserting the definition $\Gamma(r,s) = S(r) S(s) \sigma^2(r \wedge s)$
 and splitting the first integral into $\int_r^\tau = \int_r^s + \int_s^\tau$ yields that the last display equals
 \begin{align*}
  & \int_r^\tau \int_s^\tau \frac{S(u)S(v)}{S(r)S(s)} [ \sigma^2(u \wedge v) - \sigma^2(r \wedge v) - \sigma^2(s \wedge u) + \sigma^2(r \wedge s) ] \d u \d v \\
  & =  \int_s^\tau \int_s^\tau \frac{S(u)S(v)}{S(r)S(s)} [ \sigma^2(u \wedge v) - \sigma^2(r) - \sigma^2(s) + \sigma^2(r) ] \d u \d v \\
  & \quad + \int_r^s \int_s^\tau \frac{S(u)S(v)}{S(r)S(s)} [ \sigma^2(u) - \sigma^2(r) - \sigma^2(u) + \sigma^2(r) ] \d u \d v \\
  & = \int_s^\tau \int_s^\tau \frac{\Gamma(u,v)}{S(r)S(s)} \d u \d v - \sigma^2(r \vee s) g(r) g(s).
 \end{align*}
\end{proof}

\begin{proof}[Proof of Theorem~\ref{thm:EBS_KME_APP}]
 The theorem follows from Lemma~\ref{lem:EBS_KME_APP} combined with the continuous mapping theorem applied to the supremum functional $D[t_1,t_2] \rightarrow \R$, $f \mapsto \sup_{t \in [t_1,t_2]} | f(t) |$
 which is continuous on $C[t_1,t_2]$.
 For the connection between the consistency of a bootstrap distribution of a real statistic and the consistency of the corresponding tests (and the equivalent formulation in terms of confidence regions),
 see Lemma~1 in \cite{janssen03}.
\end{proof}

\section{Adaptations of Gill's (1983) Lemmata}
\label{sec:gills_lemmata}

Abbreviate again the sigma algebra containing all the information of the original sample as $\b X := \sigma(X_i,\delta_i: i = 1,\dots,n)$.
The proofs in Appendix~\ref{sec:EBS_KME_PROOFS} rely on bootstrap versions of Lemmata~2.6, 2.7 and  2.9 in \cite{gill83}.
Since those are stated under the assumption of a continuous distribution function $S$,
but ties in the bootstrap sample are inevitable,
these lemmata need a slight extension.
For completeness, parts (a) of the following two Lemmata correspond to the original Lemmata~2.6 and 2.7 in \cite{gill83}. 
\begin{lemma}[Extension of Lemma~2.6 in \citealp{gill83}]
 \label{lem:gill83_2.6}
For any $\beta \in (0,1)$, 
 \begin{itemize}
  \item[\tn{(a)}] $P( \widehat S_n(t) \leq \beta^{-1} S(t) \text{ for all } t \leq \widehat T_n) \geq 1 - \beta$,
  \item[\tn{(b)}] $P( S^*_n(t) \leq \beta^{-1} \widehat S_n(t) \text{ for all } t \leq T^*_n \ | \ \b X) \geq 1 - \beta$ almost surely.
 \end{itemize}
\end{lemma}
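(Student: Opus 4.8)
The plan is to realise, in both parts, the ratio of the estimated survival function to its ``population'' counterpart, stopped at the largest observation, as a nonnegative supermartingale equal to $1$ at time $0$, and then to invoke the maximal inequality $P(\sup_{t} Z_t \ge \lambda) \le \E[Z_0]/\lambda$ for nonnegative right-continuous supermartingales with $\lambda = \beta^{-1}$. Part (a) is Lemma~2.6 of \cite{gill83}; I recall its proof because part (b) will copy it with $S$ replaced by $\widehat S_n$ and the original sample replaced by the bootstrap sample.

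For (a), I would set $N = \sum_i \b 1\{X_i \le \cdot,\, \delta_i = 1\}$ and $Y = \sum_i \b 1\{X_i \ge \cdot\}$, so that $M = N - \int_0^\cdot Y\,\d A$ is a martingale for the canonical (self-exciting) filtration and $Y > 0$ on $[0,\widehat T_n]$. Duhamel's equation for product integrals (\citealp{abgk93}, Section~II.6), combined with the continuity of $S$ so that $\prod_{(s,t]}(1 - \d A) = S(t)/S(s)$, gives for $t \le \widehat T_n$
$$ \frac{\widehat S_n(t)}{S(t)} - 1 \;=\; -\int_0^t \frac{\widehat S_n(s-)}{S(s)\, Y(s)}\,\d M(s). $$
The right-hand side is a stochastic integral against a martingale, hence a local martingale; since $\widehat S_n \ge 0$ and $S > 0$ on $[0,\widehat T_n] \subset [0,\tau)$, the process $t \mapsto \widehat S_n(t \wedge \widehat T_n)/S(t \wedge \widehat T_n)$ is a nonnegative (local, hence genuine) supermartingale with value $1$ at $t = 0$. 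The maximal inequality with $\lambda = \beta^{-1}$ yields $P\big(\sup_{t \le \widehat T_n}\widehat S_n(t)/S(t) \ge \beta^{-1}\big) \le \beta$, and at the single point $t = \widehat T_n$ where $\widehat S_n$ may vanish the claimed inequality is trivial, so (a) follows.

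For (b), I would run the identical argument conditionally on $\b X$, now with the generically atomic population survival function $\widehat S_n$ and with the bootstrap counting processes $N^* = \sum_i \delta_i^* \b 1\{X_i^* \le \cdot\}$ and $Y^* = \sum_i \b 1\{X_i^* \ge \cdot\}$. By the martingale calculus of \cite{akritas86} for the bootstrapped counting process, relative to the filtration $\mc F_t$ introduced in the proof of Theorem~\ref{thm:ebs_kme_wl}, the process $M^* = N^* - \int_0^\cdot Y^*\,\d\widehat A_n$ is, conditionally on $\b X$, a martingale on $[0, T_n^*]$ (where $Y^* > 0$). Applying Duhamel's equation to $S_n^*$ relative to $\widehat S_n$ --- using now $\prod_{(s,t]}(1 - \d\widehat A_n) = \widehat S_n(t)/\widehat S_n(s)$ and the jump relation $\widehat S_n(s) = \widehat S_n(s-)(1 - \Delta\widehat A_n(s))$ to absorb the factor $(1 - \Delta\widehat A_n)^{-1}$ arising at atoms --- I expect the identity, for $t \le T_n^*$,
$$ \frac{S_n^*(t)}{\widehat S_n(t)} - 1 \;=\; -\int_0^t \frac{S_n^*(s-)}{\widehat S_n(s)\, Y^*(s)}\,\d M^*(s), $$
so that $t \mapsto S_n^*(t \wedge T_n^*)/\widehat S_n(t \wedge T_n^*)$ is, given $\b X$, a nonnegative supermartingale equal to $1$ at $0$. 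The conditional maximal inequality then gives $P\big(\sup_{t \le T_n^*} S_n^*(t)/\widehat S_n(t) \ge \beta^{-1} \mid \b X\big) \le \beta$ almost surely; the endpoint $t = T_n^*$ is again harmless, since $\widehat S_n(T_n^*) = 0$ forces $T_n^* = \widehat T_n$ to be an uncensored observation whose entire at-risk set in the bootstrap sample consists of events, so that $S_n^*(T_n^*) = 0$ as well.

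The main obstacle is the bootstrap Duhamel step in (b). Unlike in Gill's continuous setting, $\widehat A_n$ carries atoms, so one must (i) derive the correct form of Duhamel's equation with the $(1 - \Delta\widehat A_n)^{-1}$ correction at the jump times and check that it combines with $Y^* \ge 1$, $S_n^*(s-) \le 1$ and $\widehat S_n(s) \ge \widehat S_n(T_n^*) > 0$ on $[0,T_n^*)$ to keep the integrand bounded, so that the integral is at least a local martingale; and (ii) justify via \cite{akritas86} that $M^*$ is a genuine conditional martingale for the chosen filtration despite the with-replacement resampling --- the key probabilistic input being that, given the bootstrap at-risk set at an observed event time, the number of bootstrap events there is conditionally $\mathrm{Binomial}$ with success probability $\Delta\widehat A_n$. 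The rest is bookkeeping.
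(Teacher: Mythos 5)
Your proposal is correct and follows essentially the same route as the paper: recognize $t \mapsto S_n^*(t\wedge T_n^*)/\widehat S_n(t\wedge T_n^*)$ (given $\b X$) as a nonnegative right-continuous (super)martingale starting at $1$, apply the maximal inequality, and dispose of the endpoint $T_n^*$ by a short case analysis. The only difference is that the paper simply invokes the martingale property as established in Akritas~(1986), whereas you re-derive it via Duhamel's equation and flag (somewhat over-cautiously, since the identity with $\widehat S_n(s)$ in the denominator already accounts for the atoms) the corrections required in the discontinuous bootstrap setting.
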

\begin{proof}[Proof of \tn{(b)}]
 All equalities and inequalities concerning conditional expectations are understood as to hold almost surely.
 As in the proof of Theorem~\ref{thm:ebs_kme_wl}, $(S^*_n(t \wedge T^*_n) / \widehat S_n(t \wedge T^*_n) )_{t \in [0, \widehat T_n)}$ defines a right-continuous martingale for each fixed $n$
 and for almost every given sample $\b X$.
 Hence, Doob's $L_1$-inequality (e.g. \citealp{revuz99}, Theorem~1.7 in Chapter~II) yields for each $\beta \in (0,1)$
 \begin{align*}
  & P( \sup_{t \in [0,\widehat T_n)} S_n^*(t \wedge T^*_n) / \widehat S_n(t \wedge T^*_n) \geq \beta^{-1} \ | \ \b X) \\
  & \quad \leq \beta \sup_{t \in [0,\widehat T_n)} E( S_n^*(t \wedge T^*_n) / \widehat S_n(t \wedge T^*_n)  \ | \ \b X) \\
  & \quad = \beta E( S_n^*(0) / \widehat S_n(0)  \ | \ \b X) = \beta.
 \end{align*}
  This implies $P( S_n^* \leq \beta^{-1} \widehat S_n \text{ on } [0,T_n^*) \ | \ \b X) \geq 1 - \beta.$
  It remains to extend this result to the interval's endpoint.
  If the observation corresponding to $T^*_n$ is uncensored, we have $0 = S^*_n(T^*_n) \leq \beta^{-1} \widehat S_n(T^*_n)$.
  Else, the event of interest $\{ S_n^* \leq \beta^{-1} \widehat S_n \text{ on } [0,T_n^*) \}$ (given $\b X$) implies
  that $$S_n^*(T^*_n) = S^*_n(T_n^* - ) \leq \beta^{-1} \widehat S_n(T^*_n -) = \beta^{-1} \widehat S_n(T^*_n).$$
  Thus, for given $\b X$,
  $ \{ S_n^*(T^*_n) \leq \beta \widehat S_n(T^*_n) \} \subset \{ S^*_n \leq \beta \widehat S_n \text{ on } [0,T^*_n) \}$.
\end{proof}
\begin{lemma}[Extension of Lemma~2.7 in \citealp{gill83}]
 \label{lem:gill83_2.7}
For any $\beta \in (0,1)$, 
 \begin{itemize}
  \item[\tn{(a)}] $P( \widehat H_n(t-) \geq \beta H_n(t-) \ \text{ for all } \ t \leq \widehat T_n) \geq 1 - \frac{e}{\beta} \exp(- 1/ \beta)$,
  \item[\tn{(b)}] $P( H_n^*(t-) \geq \beta \widehat H_n(t-) \ \text{ for all } \ t \leq T^*_n \ | \ \b X) \geq 1 - \frac{e}{\beta} \exp(- 1/ \beta)$ almost surely.
 \end{itemize}
\end{lemma}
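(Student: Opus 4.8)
The plan is to reduce the statement, conditionally on $\b X$, to part~(a) applied to an i.i.d.\ uniform sample, letting the empirical distribution of $X_1,\dots,X_n$ play the role of the (now discrete) parent and absorbing the resulting ties into a single bookkeeping step. Fix a realization of $\b X$, write $F_n$ for the empirical distribution function of $X_1,\dots,X_n$ (so that $F_n(t-)=1-\widehat H_n(t-)$), and let $F_n^{-1}(p)=\inf\{x:F_n(x)\geq p\}$. Since $H_n^*$, $\widehat H_n$ and $T_n^*=\max_i X_i^*$ depend on the bootstrap sample only through its $X$-coordinates, and since, conditionally on $\b X$, these coordinates have the same joint law as $F_n^{-1}(U_1),\dots,F_n^{-1}(U_n)$ for i.i.d.\ Uniform$(0,1)$ variables $U_1,\dots,U_n$ --- the quantile representation already used in the proof of Theorem~\ref{thm:ebs_kme_wl} --- it suffices to bound from below the uniform probability of the corresponding event in the $U_i$'s.

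For this I would translate the event. Using the elementary identity $F_n^{-1}(p)\geq t\iff p>F_n(t-)$ for the empirical quantile function, one gets for every $t$ that $nH_n^*(t-)=\#\{i:U_i>F_n(t-)\}=n\bigl(1-\mathbb G_n(F_n(t-))\bigr)$ and $n\widehat H_n(t-)=n\bigl(1-F_n(t-)\bigr)$, while $T_n^*=F_n^{-1}(\max_i U_i)$ so that $t\leq T_n^*$ is equivalent to $F_n(t-)<\max_i U_i$; here $\mathbb G_n$ denotes the empirical distribution function of $U_1,\dots,U_n$. As $F_n(t-)$ ranges over the grid $\{k/n:0\leq k\leq n\}$ and, almost surely, none of the $U_i$ lands on this grid, $u\mapsto 1-\mathbb G_n(u)$ coincides with its left limit at every grid point. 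Hence the event $\{H_n^*(t-)\geq\beta\widehat H_n(t-)\text{ for all }t\leq T_n^*\}$ contains, up to a null set, the event $\{1-\mathbb G_n(u-)\geq\beta(1-u)\text{ for all }u\leq\max_i U_i\}$, which is exactly the event controlled by Lemma~2.7 of \cite{gill83} for the i.i.d.\ Uniform$(0,1)$ sample $U_1,\dots,U_n$ (parent survival $u\mapsto 1-u$, largest observation $\max_i U_i$) and therefore has probability at least $1-\tfrac{e}{\beta}\exp(-1/\beta)$. This lower bound transfers to $P(\,\cdot\mid\b X)$ for every realization of $\b X$, which proves the claim.

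The only point that requires care is exactly this bookkeeping around ties and the right endpoint of the interval: one must check that the non-injectivity of $F_n^{-1}$, the passage between $\mathbb G_n$ and its left limit, and the matching of the ranges $\{t\leq T_n^*\}$ and $\{F_n(t-)<\max_i U_i\}$ leave the inequality intact --- a difficulty of the same flavour as, and no worse than, the right-endpoint argument in the proof of part~(b) of Lemma~\ref{lem:gill83_2.6}. All of it is taken care of by the almost sure observation that the $U_i$ avoid the grid $\{k/n\}$, together with the fact that $\#\{i:U_i>F_n(t-)\}\geq 1$ whenever $t\leq T_n^*$. An alternative would be to rerun Gill's original proof of part~(a) directly in the bootstrap world, with $\widehat H_n$ as the parent survival function and its atoms tracked throughout, but the reduction above seems the cleaner route.
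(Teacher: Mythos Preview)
Your argument is correct and is essentially the same as the paper's: both reduce to the inequality for an i.i.d.\ uniform sample via the quantile transformation $X_i^*=F_n^{-1}(U_i)$. The only organisational difference is that the paper first records that the quantile transformation extends part~(a) to an arbitrary, possibly discontinuous parent survival function (citing Wellner's inequality), and then obtains~(b) in one line by applying this generalised~(a) conditionally with $\widehat H_n$ in the role of $H$; you instead unfold that same quantile-transformation step explicitly for~(b). Your careful tie bookkeeping (the $U_i$ avoiding the grid $\{k/n\}$, matching $\{t\le T_n^*\}$ with $\{F_n(t-)<\max_i U_i\}$) is exactly what the paper sweeps into the phrase ``can be shown to hold for random variables having an arbitrary, even discontinuous distribution function.''
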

\begin{proof}[Proof of \tn{(a)}]
As pointed out by \cite{gill83}, the assertion follows from the  inequality for the uniform distribution in Remark~1(ii) of \cite{wellner78}.
By using quantile transformations, his inequality can be shown to hold for random variables having an arbitrary, even discontinuous distribution function.
\\{\it Proof of \tn{(b)}.}
 Fix $X_i(\omega),\delta_i(\omega), i=1,\dots,n$.
 Since $H$ in part (a) is allowed to have discontinuities, (b) follows from (a) for each $\omega$.
\end{proof}
Let $a, b \in D[0,\tau]$ be two (stochastic) jump processes, i.e. processes being constant between two discontinuities.
If $b$ has bounded variation,
we define the integral of $a$ with respect to $b$ via
$$\int_0^s a \d b = \sum a(t) \Delta b(t), \quad s \in (0,\tau],$$
where the sum is over all discontinuities of $b$ inside the interval $(0,s]$.
If $a$ has bounded variation,
we define the above integral via integration by parts:
$\int_0^s a \d b = a(s) b(s) - a(0) b(0) - \int_0^s b_- \d a$.
\begin{lemma}[Adaptation of Lemma~2.9 in \citealp{gill83}]
 \label{lem:gill83_2.9}
Let $h \in D[0,\tau]$ be a non-negative and non-increasing jump process such that $h(0)=1$ and let $Z \in D[0,\tau]$ be a jump process which is zero at time zero.
Then for all $t \leq \tau$,
$$ \sup_{s \in [0,t]} h(s) | Z(s) | \leq 2 \sup_{s \in [0,t]} \Big| \int_0^s h(u) \d Z(u) \Big|. $$
\end{lemma}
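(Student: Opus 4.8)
The plan is to invert the correspondence $Z\mapsto\phi$, where $\phi(\cdot):=\int_0^{\cdot}h\,\d Z$, recover $Z$ from $\phi$ by integrating against $h^{-1}$, and then read off the bound from the integration-by-parts formula stated above. First I would dispose of the case $h(s)=0$: since $h$ is non-negative and non-increasing, if $h(s_0)=0$ for some $s_0\le t$ then $h$ vanishes on all of $[s_0,t]$, so $h(u)|Z(u)|=0$ there, and it suffices to bound $h(s)|Z(s)|$ for those $s\le t$ with $h(s)>0$. For such an $s$, monotonicity gives $h(u-)\ge h(u)\ge h(s)>0$ throughout $[0,s]$, so that $h^{-1}=1/h$ is a bounded, non-decreasing jump process on $[0,s]$ with $h^{-1}(0)=1$; in particular it is of bounded variation and all the integrals below are well defined in the sense introduced above.

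Fixing such an $s$, I would note that $\phi$ is itself a jump process, constant away from the discontinuities of $Z$, with $\phi(0)=0$ and $\Delta\phi(u)=h(u)\Delta Z(u)$ at each jump. Evaluating $\int_0^s h^{-1}\,\d\phi$ as a sum over jumps then gives $\sum_{u\le s}h^{-1}(u)h(u)\Delta Z(u)=\sum_{u\le s}\Delta Z(u)=Z(s)$, using $Z(0)=0$, while evaluating the same integral via the integration-by-parts rule gives $h^{-1}(s)\phi(s)-h^{-1}(0)\phi(0)-\int_0^s\phi_-\,\d(h^{-1})$. Equating the two and multiplying by $h(s)$ yields
\[
 h(s)Z(s)=\phi(s)-h(s)\int_0^s\phi(u-)\,\d(h^{-1})(u).
\]
With $G:=\sup_{s\le t}\bigl|\int_0^s h\,\d Z\bigr|=\sup_{s\le t}|\phi(s)|$, every left limit obeys $|\phi(u-)|\le G$, and since $h^{-1}$ is non-decreasing the integral is at most $G\bigl(h^{-1}(s)-h^{-1}(0)\bigr)=G\bigl(1/h(s)-1\bigr)$ in absolute value. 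Hence
\[
 h(s)|Z(s)|\le|\phi(s)|+h(s)\,G\bigl(1/h(s)-1\bigr)=|\phi(s)|+G\bigl(1-h(s)\bigr)\le G\bigl(2-h(s)\bigr)\le 2G,
\]
using $h(s)\ge 0$ at the last step, and taking the supremum over $s\in[0,t]$ finishes the proof.

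I expect the main obstacle to be not the chain of inequalities, which is elementary, but the bookkeeping for discontinuous integrators: one must verify that the jump-sum definition and the integration-by-parts definition of $\int_0^s h^{-1}\,\d\phi$ coincide, that $h^{-1}$ really is of bounded variation (which is exactly why the vanishing case of $h$ has to be isolated first), and that left limits at times that are simultaneously discontinuities of $h$ and of $Z$ are handled correctly. The constant $2$ is merely an artifact of replacing $2-h(s)$ by $2$.
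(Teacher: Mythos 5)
Your proof is correct and follows essentially the same route as the paper: recover $Z$ from $U(\cdot)=\int_0^{\cdot}h\,\d Z$ by integrating $1/h$ against $U$, apply integration by parts, and bound the resulting integral by the total mass $h^{-1}(s)-h^{-1}(0)=1/h(s)-1$ of $\d(h^{-1})$. The only cosmetic difference is the final rearrangement: the paper collects $U(t)-U(s-)$ inside the integral to obtain the bound $2G(1-h(t))+Gh(t)$, while you keep $|\phi(s)|$ and $|\phi(u-)|$ separate and obtain $|\phi(s)|+G(1-h(s))$; both reduce to $G(2-h(s))\le 2G$, so the constant $2$ enters in the same way. Your treatment of the degenerate case $h(s)=0$ and your remark that the jump-sum and integration-by-parts definitions of the Stieltjes integral must be reconciled are in fact slightly more careful than the paper's proof, which handles these points implicitly.
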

\begin{proof}
 The original proof of Lemma~2.9 in \cite{gill83} still applies for the most part with the assumptions of this lemma.
 For the sake of completeness, we present the whole proof.
 
 Let $U(t) = \int_0^t h(s) \d Z(s) $ with a $t \leq \tau$ such that $h(t) > 0$.
 Then
 \begin{align*}
  & Z(t) = \int_0^t \frac{\d U(s)}{h(s)} = \frac{U(t)}{h(t)} - \int_0^t U(s-) \d \Big( \frac{1}{h(s)} \Big) \\
  & \quad  = \int_0^t ( U(t) - U(s-)) \d \Big( \frac{1}{h(s)} \Big)  
    + \frac{U(t)}{h(0)}.
 \end{align*}
  Thus, following the lines of the original proof,
  \begin{align*}
   & | h(t) Z(t) | \leq \Big| \int_0^t (U(t) - U(s-)) \d \Big( \frac{h(t)}{h(s)} \Big) \Big| + | U(t) | h(t) \\
   & \leq 2 \sup_{0 < s \leq t } |U(s)| \Big( 1 - \frac{h(t)}{h(0)} \Big) + \sup_{0 < s \leq t} | U(s) | h(t)
   \leq 2 \sup_{0 < s \leq t } |U(s)|.
  \end{align*}
\end{proof}

\section{Bootstrap Version of the Truncation Technique for Weak Convergence}
\label{sec:trunc_tech}

The following lemma is a conditional variant of Theorem~3.2 in \cite{billingsley99}.
Let $\rho$ be the modified Skorohod metric $J_1$ on $D[0,\tau]$ as in \cite{billingsley99},
i.e. $\rho(f,g) = \inf_{\lambda \in \Lambda} ( \| \lambda \|^o \vee \sup_{t \in [0, \tau]} | f(t) - g(\lambda(t)) | ) $,
where $\Lambda$ is the collection of non-decreasing functions onto $[0, \tau]$ and $\| \lambda \|^o = \sup_{s \neq t} \Big| \log \frac{\lambda(s) - \lambda(t)}{s - t} \Big|$.
For an application in the proof of Theorem~\ref{thm:ebs_kme_wl}, note that $\rho(f,g) \leq \sup_{t \in [0,\tau]} | f(t) - g(t) |$.
\begin{lemma}
 \label{lem:abschneiden}
 Let $X: (\Omega, \mathcal{A}, P)  \rightarrow (D[0,\tau], \rho)$ be a stochastic process 
 and let the sequences of stochastic processes $X_{un}$ and $X_n$
 satisfy the following convergences given a $\sigma$-algebra $\mathcal{C}$:
 \begin{itemize}
  \item[(a)] $X_{un} \oDo Z_u$ given $\mathcal{C}$ in probability as $n \rightarrow \infty$ for every fixed $u$,
  \item[(b)] $Z_{u} \oDo X$ given $\mathcal{C}$ in probability as $u \rightarrow \infty$,
  \item[(c)] for all $\varepsilon > 0$ and for each subsequence $(n') \subset (n)$ there exists another subsequence $(n'') \subset (n')$ such that
    $$ \lim_{u \rightarrow \infty} \limsup_{n'' \rightarrow \infty} P( \rho(X_{u n''}, X_{n''}) > \varepsilon \ | \ \mathcal{C} ) = 0 
    \quad \text{almost surely}. $$
 \end{itemize}
 Then, $X_{n} \oDo X$ given $\mathcal{C}$ in probability as $n \rightarrow \infty$.
\end{lemma}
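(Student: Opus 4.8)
The statement is a conditional-in-probability analogue of the classical ``convergence-of-approximations'' lemma (Billingsley, Theorem~3.2), so the natural strategy is to reduce the conditional assertion to the unconditional one by exploiting the characterization of conditional weak convergence in probability via subsequences: $X_n \oDo X$ given $\mathcal{C}$ in probability iff every subsequence has a further subsequence along which the conditional laws converge weakly to the law of $X$ \emph{almost surely}. This subsequence principle is what makes the hypotheses (a) and (c) usable, since (c) is already phrased along such subsequences. So the first step is to fix an arbitrary subsequence $(n')\subset(n)$ and pass to a further subsequence along which all the relevant almost-sure statements hold simultaneously.

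\medskip

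\noindent\textbf{Step 1: extract a good subsequence.} Given $(n')$, first use (c) to obtain $(n'')\subset(n')$ along which $\lim_{u\to\infty}\limsup_{n''\to\infty} P(\rho(X_{un''},X_{n''})>\varepsilon\mid\mathcal C)=0$ a.s., simultaneously for all $\varepsilon$ in a countable dense set (a countable intersection of full-probability events is still full-probability). Next, since by (a) $X_{un}\oDo Z_u$ given $\mathcal C$ in probability for each fixed $u$, I can further thin $(n'')$ — using a diagonal argument over the countably many values $u\in\N$ — so that, along the final subsequence, the conditional law of $X_{un''}$ converges weakly to the law of $Z_u$ \emph{almost surely}, for every $u$. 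On the resulting full-probability event, fix $\omega$; everything below is a purely deterministic statement about the sequence of (conditional) probability measures $\mu_{n''}^\omega:=\mathcal L(X_{n''}\mid\mathcal C)(\omega)$ on $(D[0,\tau],\rho)$.

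\medskip

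\noindent\textbf{Step 2: apply the unconditional Theorem~3.2 pointwise in $\omega$.} For the fixed good $\omega$, we now have: the conditional laws of $X_{un''}$ converge weakly to $\mathcal L(Z_u)$ as $n''\to\infty$ (for each $u$); $\mathcal L(Z_u)\to\mathcal L(X)$ as $u\to\infty$ by (b) — here note (b) is a statement about the \emph{unconditional} laws of $Z_u$ and $X$, which do not depend on $\omega$, so it just holds; and $\lim_u\limsup_{n''}\mu_{n''}^\omega(\{f: \rho(X_{un''},\cdot)>\varepsilon\})$-type bounds hold from the thinned version of (c). These are precisely the three hypotheses of Billingsley's Theorem~3.2 (the deterministic, unconditional version), applied to the triangular array of laws indexed by $n''$. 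That theorem yields $\mu_{n''}^\omega \oWo \mathcal L(X)$, i.e. conditional weak convergence of $X_{n''}$ to $X$ given $\mathcal C$, along $(n'')$, at this $\omega$.

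\medskip

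\noindent\textbf{Step 3: conclude.} Since the good event has probability one, we have shown: for every subsequence $(n')$ there is a further subsequence $(n'')$ along which $X_{n''}\oDo X$ given $\mathcal C$ almost surely. By the subsequence characterization of conditional weak convergence in probability (the ``in probability'' version is equivalent to a.s. convergence along all such sub-subsequences), this gives $X_n\oDo X$ given $\mathcal C$ in probability, as claimed.

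\medskip

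\noindent\textbf{Main obstacle.} The only genuinely delicate point is Step~1: one must be careful that passing to a single subsequence on which \emph{all} the required a.s. statements hold is legitimate — this needs (a) to be upgradable to a.s. convergence along subsequences (true, since conditional weak convergence in probability to a fixed limit admits such subsequences, metrizing weak convergence of measures on the Polish space $D[0,\tau]$ by, e.g., a bounded-Lipschitz metric and invoking the standard $L^1$/in-probability-to-a.s. passage), and that the diagonalization over $u\in\N$ interacts correctly with the double limit in (c). Everything after that is a routine citation of the unconditional truncation theorem applied $\omega$ by $\omega$.
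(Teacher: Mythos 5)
Your proof is correct and follows essentially the same route as the paper: fix an arbitrary subsequence, pass (via a diagonal argument) to a sub-subsequence along which the almost-sure statements behind (a) and (c) hold simultaneously for a countable set of $\varepsilon$'s and $u$'s, and then at each $\omega$ in the resulting full-measure event run the unconditional truncation theorem (Billingsley's Theorem~3.2). The paper's proof does precisely this, but rather than citing the theorem as a black box it re-derives it by writing out the Portmanteau inequalities $P(X_{n''}\in F\mid\mathcal C)\le P(X_{u'n''}\in F_{\varepsilon_m}\mid\mathcal C)+P(\rho(X_{u'n''},X_{n''})>\varepsilon_m\mid\mathcal C)$, taking $\limsup_{n''}$, then $u'\to\infty$, then $m\to\infty$; the two are the same argument at different levels of explicitness. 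One small inaccuracy to flag: you assert that (b) is ``a statement about the unconditional laws of $Z_u$ and $X$, which do not depend on $\omega$, so it just holds.'' As stated in the lemma, (b) is also a conditional-in-probability convergence, so in general the conditional law of $Z_u$ may depend on $\omega$, and you must extract a subsequence $(u')$ along which (b) holds almost surely (the paper does this explicitly alongside (a) and (c)) before the pointwise-in-$\omega$ application of Theorem~3.2 is legitimate; in the paper's own applications $Z_u$ happens to be a deterministic Gaussian process, so the point is harmless there, but your proof of the lemma should not assume it.
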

\begin{proof}
 Choose a sequence $\varepsilon_m \downarrow 0$.
 Let $(n') \subset (n)$ be an arbitrary subsequence and choose subsequences $(n''(\varepsilon_m)) \subset (n')$ and $(u') \subset (u)$
 such that (a) and (b) hold almost surely  and also such that (c) holds along these subsequences.
 Replace $(n''(\varepsilon_m))$ by their diagonal sequence $(n'')$ ensuring (c) simultaneously for all $\varepsilon_m$.
 Let $F \subset D[0, \tau]$ be a closed subset and let $F_{\varepsilon_m} = \{f \in D[0, \tau]: \rho(f, F) \leq \varepsilon_m \}$ 
 be its closed $\varepsilon_m$-enlargement.
 We proceed as in the proof of Theorem~3.2 in \cite{billingsley99} whereas all inequalities now hold almost surely.
 \begin{align*}
  P(X_{n''} \in F \ | \ \mathcal{C}) \leq P(X_{u' n''} \in F_{\varepsilon_m} \ | \ \mathcal{C}) 
      + P(\rho(X_{u' n''}, X_{n''}) > \varepsilon_m \ | \ \mathcal{C}).
 \end{align*}
 The Portmanteau theorem in combination with (a) yields
 \begin{align*}
   \limsup_{n'' \rightarrow \infty} P(X_{n''} \in F \ | \ \mathcal{C}) & \leq P(Z_{u'} \in F_{\varepsilon_m} \ | \ \mathcal{C}) 
       + \limsup_{n'' \rightarrow \infty} P(\rho(X_{u' n''}, X_{n''}) > \varepsilon_m \ | \ \mathcal{C}) .
 \end{align*}
 Condition (b) and another application of the Portmanteau theorem imply that
 \begin{align*}
  \limsup_{n'' \rightarrow \infty} P(X_{n''} \in F \ | \ \mathcal{C}) \leq P(X \in F_{\varepsilon_m} \ | \ \mathcal{C}) .
 \end{align*}
 Let $m \rightarrow \infty$ to deduce
 $%\begin{align*}
  \limsup_{n'' \rightarrow \infty} P(X_{n''} \in F \ | \ \mathcal{C}) \leq P(X \in F \ | \ \mathcal{C})
 $ %\end{align*}
 almost surely.
 Thus, a final application of Portmanteau theorem as well as the subsequence principle
 lead to the conclusion that $X_{n} \oDo X$ given $\mathcal{C}$ in probability.
\end{proof}

%%%%%%%%%%%%%%%%%%%%%%%%%%%%%%%%%%%%%%%%%%%%%%%%%%%%%%%%%%%%%%%%%%%%%%%%%%%%%%%%%%%%%%%%%%%%%%%%%%%%%%%%%%%%%%%%%%%%%%%%%%%%%%%%%%%%%%%%%%%%%%%%
%  References  %%%%%%%%%%%%%%%%%%%%%%%%%%%%%%%%%%%%%%%%%%%%%%%%%%%%%%%%%%%%%%%%%%%%%%%%%%%%%%%%%%%%%%%%%%%%%%%%%%%%%%%%%%%%%%%%%%%%%%%%%%%%%%%%%
%%%%%%%%%%%%%%%%%%%%%%%%%%%%%%%%%%%%%%%%%%%%%%%%%%%%%%%%%%%%%%%%%%%%%%%%%%%%%%%%%%%%%%%%%%%%%%%%%%%%%%%%%%%%%%%%%%%%%%%%%%%%%%%%%%%%%%%%%%%%%%%%

\bibliography{literatur}

\begin{thebibliography}{26}
\providecommand{\natexlab}[1]{#1}
\providecommand{\url}[1]{\texttt{#1}}
\expandafter\ifx\csname urlstyle\endcsname\relax
  \providecommand{\doi}[1]{doi: #1}\else
  \providecommand{\doi}{doi: \begingroup \urlstyle{rm}\Url}\fi

\bibitem[Akritas(1986)]{akritas86}
M.~G. Akritas.
\newblock {B}ootstrapping the {K}aplan-{M}eier {E}stimator.
\newblock \emph{Journal of the American Statistical Association}, 81\penalty0
  (396):\penalty0 1032--1038, 1986.

\bibitem[Akritas and Brunner(1997)]{akritas97}
M.~G. Akritas and E.~Brunner.
\newblock {N}onparametric {M}ethods for {F}actorial {D}esigns with {C}ensored
  {D}ata.
\newblock \emph{Journal of the American Statistical Association}, 92\penalty0
  (438):\penalty0 568--576, 1997.

\bibitem[Allignol et~al.(2014)Allignol, Beyersmann, Gerds, and
  Latouche]{allignol14}
A.~Allignol, J.~Beyersmann, T.~Gerds, and A.~Latouche.
\newblock A competing risks approach for nonparametric estimation of transition
  probabilities in a non-{M}arkov illness-death model.
\newblock \emph{Lifetime Data Analysis}, 20\penalty0 (4):\penalty0 495--513,
  2014.

\bibitem[Andersen et~al.(1993)Andersen, Borgan, Gill, and Keiding]{abgk93}
P.~K. Andersen, {\O}.~Borgan, R.~D. Gill, and N.~Keiding.
\newblock \emph{Statistical Models Based on Counting Processes}.
\newblock Springer, New York, 1993.

\bibitem[Billingsley(1999)]{billingsley99}
P.~Billingsley.
\newblock \emph{Convergence of probability measures}.
\newblock Wiley, New York, second edition, 1999.

\bibitem[Dobler(2016)]{dobler16}
D.~Dobler.
\newblock \emph{{Nonparametric inference procedures for multi-state Markovian
  models with applications to incomplete life science data}}.
\newblock PhD thesis, Universit\"at Ulm, Deutschland, 2016.

\bibitem[Dobler and Pauly(2016)]{dobler16b}
D.~Dobler and M~Pauly.
\newblock {Resampling-based Confidence Intervals and Tests for the Concordance
  Index and the Win Ratio}.
\newblock \emph{Preprint arXiv:1605.04729}, 2016.

\bibitem[Efron(1981)]{efron81}
B.~Efron.
\newblock {C}ensored {D}ata and the {B}ootstrap.
\newblock \emph{Journal of the American Statistical Association}, 76\penalty0
  (374):\penalty0 312--319, 1981.

\bibitem[Gill(1980)]{gill80}
R.~D. Gill.
\newblock {Censoring and Stochastic Integrals}.
\newblock \emph{Mathematical Centre Tracts 124, Amsterdam: Mathematisch
  Centrum}, 1980.

\bibitem[Gill(1983)]{gill83}
R.~D. Gill.
\newblock Large {S}ample {B}ehaviour of the {P}roduct-{L}imit {E}stimator on
  the {W}hole {L}ine.
\newblock \emph{The Annals of Statistics}, pages 49--58, 1983.

\bibitem[Gill(1989)]{gill89}
R.~D. Gill.
\newblock {N}on- and {S}emi-{P}arametric {M}aximum {L}ikelihood {E}stimators
  and the von {M}ises {M}ethod ({P}art 1)[with {D}iscussion and {R}eply].
\newblock \emph{Scandinavian Journal of Statistics}, 16\penalty0 (2):\penalty0
  97--128, 1989.

\bibitem[Grand and Putter(2015)]{grand15}
M.~K. Grand and H.~Putter.
\newblock Regression models for expected length of stay.
\newblock \emph{Statistics in Medicine}, 2015.

\bibitem[Horvath and Yandell(1987)]{horvath87}
L.~Horvath and B.~Yandell.
\newblock {C}onvergence {R}ates for the {B}ootstrapped {P}roduct-{L}imit
  {P}rocess.
\newblock \emph{The Annals of Statistics}, pages 1155--1173, 1987.

\bibitem[Janssen and Pauls(2003)]{janssen03}
A.~Janssen and T.~Pauls.
\newblock How do bootstrap and permutation tests work?
\newblock \emph{The Annals of Statistics}, 31\penalty0 (3):\penalty0 768--806,
  2003.

\bibitem[Lo and Singh(1986)]{lo86}
S.-H. Lo and K.~Singh.
\newblock {T}he {P}roduct-{L}imit {E}stimator and the {B}ootstrap: {S}ome
  {A}symptotic {R}epresentations.
\newblock \emph{Probability Theory and Related Fields}, 71\penalty0
  (3):\penalty0 455--465, 1986.

\bibitem[Meilijson(1972)]{meilijson72}
I.~Meilijson.
\newblock {Limiting Properties of the Mean Residual Lifetime Function}.
\newblock \emph{The Annals of Mathematical Statistics}, 43\penalty0
  (1):\penalty0 354--357, 1972.

\bibitem[Pocock et~al.(2012)Pocock, Ariti, Collier, and Wang]{pocock2012win}
S.~J. Pocock, C.~A. Ariti, T.~J. Collier, and D.~Wang.
\newblock The win ratio: a new approach to the analysis of composite endpoints
  in clinical trials based on clinical priorities.
\newblock \emph{European heart journal}, 33\penalty0 (2):\penalty0 176--182,
  2012.

\bibitem[Pollard(1984)]{pollard84}
D.~Pollard.
\newblock \emph{{Convergence of Stochastic Processes}}.
\newblock Springer, New York, 1984.

\bibitem[Revuz and Yor(1999)]{revuz99}
D.~Revuz and M.~Yor.
\newblock \emph{Continuous {M}artingales and {B}rownian {M}otion}.
\newblock Springer, third edition, 1999.

\bibitem[Stute and Wang(1993)]{stute93}
W.~Stute and J.-L. Wang.
\newblock {T}he {S}trong {L}aw under {R}andom {C}ensorship.
\newblock \emph{The Annals of Statistics}, 21\penalty0 (3):\penalty0
  1591--1607, 1993.

\bibitem[Tattar and Vaman(2012)]{tattar12}
P.~Tattar and H.~Vaman.
\newblock Extension of the {H}arrington-{F}leming tests to multistate models.
\newblock \emph{Sankhya B}, 74\penalty0 (1):\penalty0 1--14, 2012.

\bibitem[Tse(2006)]{tse06}
S.-M. Tse.
\newblock {Lorenz curve for truncated and censored data}.
\newblock \emph{Annals of the Institute of Statistical Mathematics},
  58\penalty0 (4):\penalty0 675--686, 2006.

\bibitem[van~der Vaart and Wellner(1996)]{vaart96}
A.~W. van~der Vaart and J.~Wellner.
\newblock \emph{Weak Convergence and Empirical Processes}.
\newblock Springer, New York, 1996.

\bibitem[Wang(1987)]{wang87}
J.-G. Wang.
\newblock A {N}ote on the {U}niform {C}onsistency of the {K}aplan-{M}eier
  {E}stimator.
\newblock \emph{The Annals of Statistics}, pages 1313--1316, 1987.

\bibitem[Wellner(1978)]{wellner78}
J.~A. Wellner.
\newblock Limit theorems for the ratio of the empirical distribution function
  to the true distribution function.
\newblock \emph{Zeitschrift f{\"u}r Wahrscheinlichkeitstheorie und verwandte
  Gebiete}, 45\penalty0 (1):\penalty0 73--88, 1978.

\bibitem[Ying(1989)]{ying89}
Z.~Ying.
\newblock A note on the asymptotic properties of the product-limit estimator on
  the whole line.
\newblock \emph{Statistics \& Probability Letters}, 7\penalty0 (4):\penalty0
  311--314, 1989.

\end{thebibliography}
\bibliographystyle{plainnat}

\end{document}